\documentclass[11pt,reqno]{amsart}

\usepackage[margin=1in]{geometry}
\usepackage{amsmath,amssymb,amsthm,mathtools}
\usepackage{enumitem}
\usepackage{booktabs,array}
\usepackage{microtype}
\usepackage[T1]{fontenc}
\usepackage{lmodern}
\usepackage[hidelinks]{hyperref}
\numberwithin{equation}{section} 

\newtheorem{theorem}{Theorem}[section]
\newtheorem{proposition}[theorem]{Proposition}
\newtheorem{lemma}[theorem]{Lemma}
\newtheorem{corollary}[theorem]{Corollary}
\newtheorem{remark}[theorem]{Remark}
\newtheorem{definition}[theorem]{Definition}

\newcommand{\R}{\mathbb R}
\newcommand{\Hog}[3]{\left\langle \genfrac{}{}{0pt}{}{#1}{#2}\right\rangle^{(#3)}}
\newcommand{\qge}{\geq_t}

\newcommand{\coef}{\succeq_t}

\DeclareMathOperator{\diag}{diag}

\makeatletter
\def\paragraph{\@startsection{paragraph}{4}%
  \z@\z@{-\fontdimen2\font}%
  {\normalfont\bfseries}}
\makeatother

\title[Narayana Interpolations and Total Positivity]{Continuous Narayana Interpolations:\\
Hankel Total Positivity and Log-Convexity Preservation}
\author{Jiang Zeng}
\address{College of Mathematics and Physics, Wenzhou University, Wenzhou 325035, China}
\address{Universit\'e Claude Bernard Lyon 1, CNRS UMR 5208, Institut Camille Jordan, France}
\email{zeng@math.univ-lyon1.fr}
\date{\today}
\keywords{Narayana polynomials; Gegenbauer polynomials; Hoggatt transformations; coefficientwise Hankel total positivity; strong log-convexity; continuants}
\subjclass[2020]{05A15, 05A20, 33C45, 33D45}
\begin{document}
\begin{abstract}
We study a continuous interpolation between the type $B$ and shifted
type $A$ Narayana triangles. Using a direct Gegenbauer-moment proof and
a continuant comparison criterion, we show that the row polynomials
form a coefficientwise Hankel-totally positive sequence for every
real $\theta\ge0$, and establish the same property for the moment sequences associated with all tails of the corresponding Jacobi matrix. Their
$\gamma$-expansion, real-rootedness and Stieltjes moment representation
are explicit, as are the exact orders of vanishing and extreme
coefficients of arbitrary Hankel minors.
This confirms and strengthens Rodelet--Causse--Tevlin's
log-convexity conjecture for super Narayana polynomials.
We further extend Wang--Yang's total positivity of the type~$A$ and
$B$ Narayana triangles and strict total positivity of the associated
squares to every real $\theta>-1$, using P\'olya-frequency sequences
and a gamma-product moment representation.
We also prove that the continuous Narayana transformations preserve
strong $t$-log-convexity for every $\theta\ge0$, and that every
$d$-Hoggatt transformation has the same property for integers $d\ge1$.
The preservation proofs employ the Zhu--Sun criterion and a
mixed-minor one-sign-change argument based on a hypergeometric
hitting-probability inequality. 
\end{abstract}
\maketitle

\section{Introduction}
Two classical Narayana triangles are the type~$B$ triangle
$\binom nk^2$ and the shifted type~$A$ triangle
$N(n+1,k+1)=\frac{n+1}{(k+1)(n-k+1)}\binom nk^2$.
Zhu and Sun~\cite{ZhuSun2015} proved strong
$t$-log-convexity preservation for the type~$B$ transformation and
for the ordinary (non-shifted) type~$A$ Narayana transformation.
The precise relation between their latter formulation and our
shifted type~$A$ kernel is explained in
Remark~\ref{rem:ZhuSun_index_shift}.
For a real parameter $\theta\ge0$, define the continuous interpolation
\begin{equation}\label{eq:interpolating_triangle_intro}
 \mathcal N_\theta(n,k)=\binom nk
 \frac{(\theta+1)_n}{(\theta+1)_k(\theta+1)_{n-k}},
 \qquad 0\le k\le n,
\end{equation}
where $(a)_n$ denotes the rising factorial. In particular,
$\mathcal N_0(n,k)=\binom nk^2$ and
$\mathcal N_1(n,k)=N(n+1,k+1)$.
Its row polynomials are
\begin{equation}\label{eq:theta_rows_intro}
 P_n^{(\theta)}(t)=\sum_{k=0}^n\mathcal N_\theta(n,k)t^k.
\end{equation}
For integral $\theta$, these hypergeometric polynomials are related to
recent work of Mao and Wang~\cite{MaoWang2026}.
Wang and Yang~\cite{WangYang2018} proved total positivity of the
classical type~$A$ and type~$B$ Narayana triangles and strict total
positivity of their associated squares.  Our interpolation unifies
these two endpoint statements, and their methods extend to the full
continuous family:
\begin{theorem}\label{thm:wangyang_intro}
For every real $\theta>-1$, the lower-triangular matrix
$[\mathcal N_\theta(n,k)]_{n,k\ge0}$ is totally positive, and the
square $[\mathcal N_\theta(i+j,j)]_{i,j\ge0}$ is strictly totally
positive.
\end{theorem}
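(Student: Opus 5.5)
Write $c_m=(\theta+1)_m/m!=\binom{m+\theta}{m}$, with $c_m>0$ for $\theta>-1$. Then
\[
\mathcal N_\theta(n,k)=\binom nk\frac{(\theta+1)_n}{(\theta+1)_k(\theta+1)_{n-k}}
=\frac{n!\,c_n}{k!\,(n-k)!}\cdot\frac{k!\,(n-k)!}{n!}\cdot\frac{n!}{k!\,(n-k)!}\cdot\frac{1}{c_kc_{n-k}}\cdot\frac{n!}{n!}
=\frac{n!\,c_n}{c_k\,c_{n-k}\,k!\,(n-k)!}\cdot\frac{1}{1},
\]
and more usefully $\mathcal N_\theta(n,k)=\dfrac{(\theta+1)_n}{(\theta+1)_k}\cdot\dfrac{1}{(n-k)!}\cdot\dfrac{n!}{k!\,(\theta+1)_{n-k}}$.

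The cleanest way to organize this is to write $\mathcal N_\theta(n,k)=\alpha_n\beta_k\,a_{n-k}$ with
\[
\alpha_n=n!\,(\theta+1)_n,\qquad \beta_k=\frac{1}{k!\,(\theta+1)_k},\qquad a_m=\frac{1}{m!\,(\theta+1)_m}.
\]
Multiplying rows and columns by positive scalars preserves (strict) total positivity. So the lower-triangular matrix is TP if and only if the Toeplitz matrix $[a_{n-k}]$ is TP, i.e.\ if and only if $(a_m)_{m\ge0}$ is a Pólya frequency sequence. Likewise, $\mathcal N_\theta(i+j,j)=\alpha_{i+j}\beta_j a_i$. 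After removing the column factors $\beta_j$ and the row factors $a_i$, the square becomes $[\alpha_{i+j}]_{i,j}$, a Hankel matrix in the sequence $\alpha_n=n!\,(\theta+1)_n$. So the square statement is equivalent to strict total positivity of this Hankel matrix, i.e.\ to $(\alpha_n)$ being a Stieltjes moment sequence of a measure with infinite support.

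\textbf{Step 1 (PF for the triangle).} The generating function is
\[
\sum_m a_m z^m={}_0F_1(;\theta+1;z)=\Gamma(\theta+1)\,z^{-\theta/2}I_\theta(2\sqrt z),
\]
a normalized modified Bessel function. For $\theta>-1$ it is entire of genus zero, and all its zeros are real and negative, since the zeros of $J_\theta$ are real for $\theta>-1$. Hence ${}_0F_1(;\theta+1;z)=\prod_j(1+z/r_j)$ with $r_j>0$ and $\sum 1/r_j<\infty$. By the Aissen–Schoenberg–Whitney–Edrei theorem, $(a_m)$ is a PF sequence, so $[a_{n-k}]$ is TP and the triangle is TP. An alternative is the Laguerre–Pólya route: $a_m=1/\Gamma(m+1)\Gamma(m+\theta+1)$ times a constant.

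\textbf{Step 2 (Stieltjes moments for the square).} Use the gamma-product moment representation
\[
n!\,(\theta+1)_n=\frac{\Gamma(n+1)\Gamma(n+\theta+1)}{\Gamma(\theta+1)}
=\frac{1}{\Gamma(\theta+1)}\int_0^\infty\!\!\int_0^\infty (xy)^n e^{-x-y}y^{\theta}\,dx\,dy.
\]
This is the $n$th moment of the pushforward of $e^{-x}y^\theta e^{-y}\,dx\,dy$ under $(x,y)\mapsto xy$. That pushforward has a positive density on $(0,\infty)$, expressible through $K_\theta$, and in particular has infinite support. Consequently every Hankel minor of $[\alpha_{i+j}]$ built from rows $i_1<\dots<i_r$ and columns $j_1<\dots<j_r$ equals, by the Cauchy–Binet/Andréief identity,
\[
\frac{1}{r!}\int\det[u_s^{i_p}]\det[u_s^{j_q}]\,d\mu^{\otimes r}.
\]
For ordered $0<u_1<\dots<u_r$, both determinants are generalized Vandermonde determinants and are strictly positive. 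The integrand is therefore nonnegative, and positive on a set of positive measure, so every minor is $>0$.

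The main obstacle I expect is Step 2's positivity of \emph{all} minors, not just contiguous ones. The Andréief argument handles this, provided one checks the strict positivity of generalized Vandermonde determinants with increasing exponents, which is classical (Gantmacher–Krein). If one prefers to avoid measure arguments, the fallback is that products of Stieltjes moment sequences with positive-density measures remain Stieltjes with infinite support. Indeed, $n!$ and $(\theta+1)_n$ are moments of Gamma measures, and their product corresponds to a multiplicative convolution.
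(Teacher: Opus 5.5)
Your proposal is correct and follows the paper's proof essentially step for step. It uses the same diagonal factorization $\mathcal N_\theta(n,k)=w_n a_{n-k}a_k$ with $w_n=n!(\theta+1)_n=1/a_n$, the Bessel (Laguerre--P\'olya) product for ${}_0F_1(;\theta+1;z)$ with Edrei's theorem for the Toeplitz factor, and the gamma-product moment representation of $w_n$ with the two-generalized-Vandermonde (Andr\'eief) integral for strict total positivity of the Hankel factor. Your first displayed chain is wrong: it evaluates to $\binom nk c_n/(c_kc_{n-k})$, so it is missing a factor $\binom nk$. You immediately replace it with the correct factorization, however, so nothing in the argument depends on it.
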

The proof in Section~\ref{sec:wangyang_classical} uses P\'olya-frequency
sequences and Stieltjes moments, as in~\cite{WangYang2018}.
These matrix statements should be distinguished from coefficientwise
Hankel total positivity of the row polynomials, which is the next result.

We next establish an all-orders positivity theorem for the row-generating polynomials, viewed as a moment sequence rather than as the entries of a triangle.
\begin{theorem}\label{thm:theta_structure_intro}
For every real $\theta\ge0$, the sequence
$\{P_n^{(\theta)}(t)\}_{n\ge0}$ and every associated Jacobi tail are
coefficientwise Hankel-totally positive. Moreover,
\begin{equation}\label{eq:gamma_intro}
P_n^{(\theta)}(t)=
\sum_{j=0}^{\lfloor n/2\rfloor}
\frac{n!}{j!(n-2j)!(\theta+1)_j}
 t^j(1+t)^{n-2j}.
\end{equation}
Thus these row polynomials are $\gamma$-positive. For each fixed
$t\ge0$, they form a Stieltjes moment sequence with support in
$[(1-\sqrt t)^2,(1+\sqrt t)^2]$.
\end{theorem}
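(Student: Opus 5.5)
The plan is to derive everything from one explicit moment representation, and to deduce the coefficientwise Hankel total positivity from the associated Jacobi continued fraction (J-fraction) together with the continuant comparison criterion stated in the abstract.

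\textbf{Step 1: the $\gamma$-expansion \eqref{eq:gamma_intro}.} Extract the coefficient of $t^k$ on the right-hand side of \eqref{eq:gamma_intro}. This gives
\[
\sum_j \frac{n!}{j!\,(n-2j)!\,(\theta+1)_j}\binom{n-2j}{k-j}.
\]
I will show that this equals $\mathcal N_\theta(n,k)$. This is a terminating hypergeometric identity. Writing $\mathcal N_\theta(n,k)/\binom nk$ as a ratio of rising factorials, it should reduce to Chu--Vandermonde, or to Pfaff--Saalsch\"utz for a balanced ${}_3F_2$. If that reduction is awkward, the fallback is to verify that both sides satisfy the same three-term recurrence in $n$ with the same initial values. $\gamma$-positivity is then immediate for $\theta\ge0$, since all weights $n!/(j!(n-2j)!(\theta+1)_j)$ are positive.

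\textbf{Step 2: the Stieltjes representation.} Rewrite \eqref{eq:gamma_intro} as
\[
P_n^{(\theta)}(t)=\sum_j\binom n{2j}(1+t)^{n-2j}m_{2j},
\qquad
m_{2j}=\frac{(2j)!}{j!\,(\theta+1)_j}t^j=(4t)^j\frac{(1/2)_j}{(\theta+1)_j}.
\]
Take $Y$ with the normalized Gegenbauer density proportional to $(1-y^2)^{\theta-1/2}$ on $[-1,1]$; its even moments are $\mathbb E\,Y^{2j}=(1/2)_j/(\theta+1)_j$. Hence $m_{2j}=\mathbb E\,(2\sqrt t\,Y)^{2j}$, and the odd moments vanish. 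Therefore
\[
P_n^{(\theta)}(t)=\mathbb E\,(1+t+2\sqrt tY)^n .
\]
This is a Stieltjes moment sequence for fixed $t\ge0$, with support $[(1-\sqrt t)^2,(1+\sqrt t)^2]\subset[0,\infty)$. The density is integrable precisely for $\theta>-1/2$, so $\theta\ge0$ is safe. Real-rootedness follows from $\gamma$-positivity combined with a standard argument via the Gegenbauer polynomials.

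\textbf{Step 3: the J-fraction and the main obstacle.} The representation in Step 2 gives the J-fraction of the ordinary generating function of $(P_n^{(\theta)}(t))_n$. The recurrence coefficients of the rescaled Gegenbauer family give
\[
b_n=1+t,\qquad \beta_n=t\,\frac{n(n+2\theta-1)}{(n+\theta)(n+\theta-1)}.
\]
The $k$-th Jacobi tail is the same fraction with indices shifted by $k$.

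Coefficientwise Hankel total positivity of the moments and of all tails is equivalent to total positivity of the production matrix in the coefficientwise sense. The naive route is to compare with the Narayana case $\beta_n=t$. That works when $\beta_n/t\le 1$, i.e.\ when $\theta(\theta-1)\ge0$, but it fails for $0<\theta<1$; for example $\beta_1=2t/(1+\theta)>t$ there.

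This is where I expect the real difficulty, and where I would use the continuant comparison criterion. The idea is to bound each continuant of the tridiagonal matrix, or equivalently each leading-minor ratio, by the corresponding continuant with $\beta_n$ replaced by a dominating sequence whose total positivity is known. A natural dominating choice is a sequence where the diagonal is split as $1+t=\alpha_{2n}+\alpha_{2n+1}$, with $\alpha_{2n}$ and $\alpha_{2n+1}$ polynomials in $t$ with nonnegative coefficients.

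Concretely, I would try to show that every contiguous continuant
\[
\det\bigl[(1+t)\delta_{ij}+\ldots\bigr]_{k\le i,j\le m}
\]
has nonnegative coefficients. The plan is to write it as $\sum_j c_j\,t^j(1+t)^{m-k+1-2j}$ and prove $c_j\ge0$ by induction, using the fact that the products $\prod \beta_n/t$ over disjoint dimer pairs are bounded by the corresponding Catalan-type counts.

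Coefficientwise total positivity of the tridiagonal production matrix, together with Sokal's criterion that the production-matrix hypothesis implies Hankel total positivity of the moments and of every tail, then finishes the proof.
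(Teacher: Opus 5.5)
Your Steps 1 and 2 agree with the paper: Chu--Vandermonde for the coefficients, and the Gegenbauer even moments $(\tfrac12)_j/(\theta+1)_j$ for the affine moment representation. The final transfer also agrees, from a coefficientwise totally positive production matrix to Hankel total positivity of the moments and of every tail.

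\textbf{The gap.} It is in the one hard step: coefficientwise total positivity of the tridiagonal matrix with diagonal $1+t$, superdiagonal $1$ and subdiagonal $t\rho_n(\theta)$ when $0\le\theta<1$. There $\rho_1>\rho_2>\cdots>1$, with $\rho_1(0)=2$. None of your suggested routes works.
\begin{enumerate}
\item \emph{Proving $c_j\ge0$ in the basis $t^j(1+t)^{N-2j}$.} By the matching expansion, a contiguous continuant equals $\sum_j(-1)^je_j\,t^j(1+t)^{N-2j}$, where $e_j$ sums products of the $\rho_i$ over $j$-matchings. So these coefficients alternate in sign. Already $(1+t)^2-t\rho_1$ has $c_1=-\rho_1<0$, and even the Narayana continuant $1+t+t^2=(1+t)^2-t$ is not $\gamma$-positive. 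Positivity of continuants comes from cancellation in the monomial basis and has to be proved there.
\item \emph{Splitting $1+t=\alpha_{2n}+\alpha_{2n+1}$.} With nonnegative-coefficient pieces, each piece is linear with coefficients in $[0,1]$. Requiring the subdiagonal entry $t\rho_n$ to be a product of adjacent pieces then forces $\rho_n\le1$, which is exactly the range you already handle.
\item \emph{A generic dominating sequence.} Since $\partial C_n/\partial b_i=-tC_{i-1}C_{n-i-1}$, continuants decrease as weights increase, so you need an \emph{upper} sequence with positive continuants. Natural candidates fail: constant weight $2$ gives $C_3=1-t-t^2+t^3$. The $\theta=0$ sequence $(2,1,1,\ldots)$ does not dominate $\rho(\theta)$, because $\rho_n(\theta)>1$ for $n\ge2$.
\end{enumerate}

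\textbf{The missing idea: compare $\rho(\theta)$ with itself.}
\begin{enumerate}
\item Since $\rho_n(\theta)-1=\theta(1-\theta)/((n+\theta-1)(n+\theta))$, the sequence decreases in $n$ for $0<\theta<1$. Hence every tail's weights satisfy $\rho_{s+i}\le\rho_i$.
\item The leading continuants have a closed form from the Gegenbauer generating function, which becomes $(1-u)^{-\theta}(1-tu)^{-\theta}$ after the substitution. This gives $E_n^{(\theta)}(t)=\frac{n!}{(\theta)_n}\sum_k\frac{(\theta)_k(\theta)_{n-k}}{k!(n-k)!}t^k$, with positive coefficients.
\item Use the telescoping identity $C_n(b)=C_n(a)+t\sum_i(a_i-b_i)\,C_{i-1}(a_1,\ldots,a_{i-2})\,C_{n-i-1}(b_{i+2},\ldots,b_{n-1})$ with induction on $n$. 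For a decreasing $a$ with nonnegative leading continuants, it shows every continuant with weights $0\le b_i\le a_i$ is coefficientwise nonnegative.
\item Hence all contiguous principal minors are nonnegative, and the tridiagonal criterion gives total positivity.
\item The case $\theta=0$ follows by a limit, or directly since $E_n^{(0)}=1+t^n$.
\end{enumerate}
Without the monotonicity of $\rho_n(\theta)$ and the explicit positive leading continuants, your Step 3 does not establish the Hankel assertion for $0\le\theta<1$.
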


The classical ultraspherical (Gegenbauer) probability measure is
\begin{equation}\label{eq:classical_moment_bridge_intro}
 d\nu_\theta(x)=
 \frac{\Gamma(\theta+1)}{\sqrt\pi\,\Gamma(\theta+\tfrac12)}
 (1-x^2)^{\theta-1/2}\,dx \quad (-1<x<1),
\end{equation}
with the limiting arcsine measure at $\theta=0$. Its affine moments
are precisely
\begin{equation}\label{eq:classical_affine_intro}
 P_n^{(\theta)}(t)=\int_{-1}^1(1+t+2\sqrt t\,x)^n\,d\nu_\theta(x).
\end{equation}
The Hankel theorem will be proved \emph{directly} from the Gegenbauer
three-term recurrence, without taking limits of basic-hypergeometric
identities. In particular the proof works within the classical theory.
The method uses a comparison of tridiagonal continuants and the
production-matrix factorization of Hankel matrices; see
\cite{ChenSokal2024,PSZ,Zhu2024,DDPS} for related transfer theorems.
We also determine the exact vanishing orders, palindromicity, and
extreme coefficients of arbitrary Hankel minors.

At integers $s\ge0$, the classical super Narayana polynomials of
Rodelet--Causse and Tevlin~\cite{RodeletTevlin2025} satisfy
$\mathcal N_{s+r}^{(s)}(1,t)=\binom{2s}sP_r^{(s)}(t)$ for $r\ge0$.
Theorem~\ref{thm:theta_structure_intro} therefore confirms their
Conjecture~3 on coefficientwise $t$-log-convexity and strengthens
it to Hankel total positivity of all orders; see
Corollary~\ref{cor:super-Narayana-HTP}.

Our other two theorems concern positivity-preserving transformations,
which are different from Hankel positivity of the particular row sequence.
\begin{theorem}\label{thm:interpolation}
Fix $\theta\ge0$. If $\{f_k(t)\}_{k\ge0}$ is strongly
$t$-log-convex with nonnegative coefficients, then
\[
 G_n^{(\theta)}(t)=\sum_{k=0}^n\mathcal N_\theta(n,k)f_k(t)
\]
is strongly $t$-log-convex.
\end{theorem}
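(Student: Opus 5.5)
The plan is to apply the Zhu--Sun criterion for preservation of strong $t$-log-convexity by a triangular transformation. Recall its form, as used in \cite{ZhuSun2015}: for a lower-triangular array $[a_{n,k}]$ with nonnegative entries, one expands the quadratic form
\[
 G_{n+1}G_{n-1}-G_n^2=\sum_{i\le j}\bigl(a_{n+1,i}a_{n-1,j}+a_{n+1,j}a_{n-1,i}-2a_{n,i}a_{n,j}\bigr)f_if_j
\]
(with the diagonal term halved). Then one regroups these terms along the anti-diagonals $i+j=m$. Strong $t$-log-convexity of $\{f_k\}$ gives $f_{i}f_{j}\ge_t f_{i'}f_{j'}$ coefficientwise whenever $i'\le i\le j\le j'$ and $i+j=i'+j'$. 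So it suffices to show that, for each fixed $m$ and each $n$, the coefficient sequence $c_{m,\ell}$ (indexed by $\ell=j-i\ge0$ within the anti-diagonal $i+j=m$, ordered from the center outward) has nonnegative tail partial sums $\sum_{\ell\ge L}c_{m,\ell}\ge0$ for every $L$. By Abel summation against the decreasing chain $f_{\lfloor m/2\rfloor}f_{\lceil m/2\rceil}\ge_t\cdots$ this yields $G_{n+1}G_{n-1}-G_n^2\ge_t0$. The same must be done for the shifted differences $G_{n+1}G_{n-1}$ versus $G_nG_n$ at all indices needed in the definition of strong log-convexity ($G_{n+1}G_{m-1}-G_nG_m$ for $n\ge m$). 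The same regrouping applies, with the kernel $a_{n+1,i}a_{m-1,j}+a_{n+1,j}a_{m-1,i}-a_{n,i}a_{m,j}-a_{n,j}a_{m,i}$.

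First I will compute these kernels explicitly for $a_{n,k}=\mathcal N_\theta(n,k)$. Writing $\mathcal N_\theta(n,k)=\frac{n!\,(\theta+1)_n}{k!(n-k)!(\theta+1)_k(\theta+1)_{n-k}}$, each product $a_{p,i}a_{q,j}$ factors as a common prefactor times a rational function in $i$ with $i+j=m$ fixed. The prefactor is a product of gamma ratios and is positive for $\theta\ge0$. Next I will show that along each anti-diagonal the kernel, viewed as a function of $\ell$, has at most one sign change, from negative near the center to positive at the extremes, and that its total sum is nonnegative. Together these give the required tail-sum positivity. The total sum over the anti-diagonal is an explicit hypergeometric convolution, evaluable by Chu--Vandermonde in the $\theta$-deformed form $\sum_i\binom{m}{i}\frac{(\ldots)}{(\theta+1)_i(\theta+1)_{m-i}}$. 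This should reduce nonnegativity of the full sum to a statement closely related to log-convexity of the row sums. That statement in turn follows from the Stieltjes moment property of Theorem~\ref{thm:theta_structure_intro}, or directly from the gamma-product representation underlying Theorem~\ref{thm:wangyang_intro}.

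The main obstacle is the one-sign-change claim, since the kernel is a difference of four hypergeometric-type terms. My plan is to divide through by one term, say $a_{n,i}a_{m,j}$. The ratio then becomes a sum of ratios of Pochhammer products of the form $\frac{(n+1)(\theta+n+1)}{(n+1-i)(\theta+n+1-i)}\cdot\frac{(m-j)(\theta+m-j)}{m(\theta+m)}$. I will then show that the normalized kernel is monotone in $\ell$, or at least has a unique crossing. For this I would interpret the normalized quantities as probabilities, namely the probability that a (generalized, $\theta$-weighted) hypergeometric urn hits a given level. The inequality needed then becomes a hitting-probability comparison that is monotone in the starting position. Failing a clean probabilistic argument, I would fall back on showing that the normalized kernel is the restriction of a function that is a ratio of two polynomials in $\ell^2$, where the numerator has degree at most one in $\ell^2$ after cancellation, which forces at most one sign change. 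The condition $\theta\ge0$ should enter exactly here, guaranteeing that the $\theta$-factors $(\theta+r)$ are ordered compatibly with the binomial factors.
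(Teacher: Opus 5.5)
Your architecture is the paper's: regrouping along anti-diagonals and asking for nonnegative mixed coefficients on the outside, nonpositive ones near the centre, and nonnegative totals is exactly the weak-sign Zhu--Sun criterion, Lemma~\ref{lem:ZSweak}. (You stated the $f$-comparison backwards. Strong $t$-log-convexity gives $f_0f_h\ge_t f_1f_{h-1}\ge_t\cdots\ge_t f_{\lfloor h/2\rfloor}f_{\lceil h/2\rceil}$, so the outer products dominate. Your tail-sum condition is the right one for this chain, not for the chain you wrote.)

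The first genuine gap is the total. In the paper's indexing ($m\ge n$, anti-diagonal $i+j=h$), the sum of the kernel along the anti-diagonal is exactly $[t^h]\bigl(P^{(\theta)}_{n-1}P^{(\theta)}_{m+1}-P^{(\theta)}_nP^{(\theta)}_m\bigr)$. So its nonnegativity for all $h$ \emph{is} coefficientwise strong $t$-log-convexity of the rows, condition (C1). It is not a Chu--Vandermonde sum: at $\theta=0$ it involves $\sum_i\binom{n-1}{i}^2\binom{m+1}{h-i}^2$. Neither tool you cite proves it. The Stieltjes representation at a fixed real $t\ge0$ yields only the numerical inequality $P_{n-1}(t)P_{m+1}(t)\ge P_n(t)P_m(t)$. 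The gamma-product representation yields total positivity of the square $[\mathcal N_\theta(i+j,j)]$, which concerns single entries rather than these convolution coefficients. The paper gets (C1) from the coefficientwise Hankel total positivity of Theorem~\ref{ar:cor:classical}, via its order-two minors (Corollary~\ref{prop:interpolation_C1}). For $0<\theta<1$, where $\rho_n(\theta)>1$ and Corollary~\ref{ar:cor:one} does not apply, this rests on the continuant comparison of Theorem~\ref{ar:thm:criterion} with the leading continuants of Lemma~\ref{ar:lem:gegenbauer-leading}. Citing the Hankel half of Theorem~\ref{thm:theta_structure_intro}, rather than its Stieltjes half, closes this gap.

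The main gap is the one-sign-change property (C2), for which you give only heuristics that do not work as stated. Even where an urn exists (the Hoggatt factors $(x)_d$, Lemma~\ref{lem:hypergeom_hit}), it supplies only a local inequality, not the sign pattern. For real $\theta$ there is no urn for $R_\theta(x)=x(x+\theta)$.

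Degree counting fails already for $m=n$. Take $a=n-l$, $b=n-k$, $\lambda=R_\theta(n+1)/R_\theta(n)$. The normalized coefficient then has numerator $\lambda[R_\theta(b)R_\theta(b+1)+R_\theta(a)R_\theta(a+1)]-2R_\theta(a+1)R_\theta(b+1)$. This is quadratic in $\ell^2$ (where $\ell=b-a$) with leading coefficient $(\lambda-1)/8\neq0$. For $m>n$, the product $\rho_k=\prod_{s=1}^{c-1}R_\theta(a+s)/R_\theta(b+s)$ with $c=m-n+1$ raises the degree further.

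The missing argument is the paper's. Write the normalized coefficient as $(P_k-1)-\rho_k(1-Q_k)$, where $P_k=\lambda R_\theta(b)/R_\theta(a+c)$ and now $\lambda=R_\theta(n+c)/R_\theta(n)$. Here $P_k$ strictly decreases under $(a,b)\mapsto(a+1,b-1)$ and $0\le Q_k<1$, so once $P_k\le1$ every inner term is negative. Otherwise one shows that $T_k=(P_k-1)/(\rho_k(1-Q_k))=K(a,b)H_\lambda(a,b)$ is nonincreasing. The factor $K$ decreases trivially. The inequality $H_\lambda(a,b)\ge H_\lambda(a+1,b-1)$ holds because the cross-multiplied difference $E(\mu)$ is concave in $\mu$ and nonnegative at both ends of $[L,U]\ni\lambda$. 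At $\mu=L$ the next numerator vanishes. At $\mu=U=R_\theta(b+c)/R_\theta(b)$ one has $H_U(a,b)=\Phi_\theta(a,b;c)$. Since $H_\mu(a+1,b-1)$ increases in $\mu$, the comparison there reduces to the telescoped inequality $\Phi_\theta(a,b;c)\ge\Phi_\theta(a+1,b-1;c)$, which comes from the explicit factorization in Lemma~\ref{lem:theta_local}.

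You would also need the truncated range $n<l\le m$. There the coefficient equals $\mathcal N_\theta(n,k)\mathcal N_\theta(m,l)(P_k-1)$, and it joins the interior sign pattern because the same $P_k$ keeps decreasing.
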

For every integer $d\ge1$, put
\begin{equation}\label{eq:hoggatt}
\Hog{n}{k}{d}=\prod_{j=0}^{d-1}
\frac{\binom{n+j}{k}}{\binom{k+j}{k}}.
\end{equation}
These are the $d$-Hoggatt numbers; $d=2$ recovers the shifted type~$A$
Narayana triangle. Chern and Shi~\cite{ChernShi2026} proved a related
ordinary log-convexity result and strong $t$-log-convexity of their
row-generating polynomials.
The following is a coefficientwise strengthening 
of Chern and Shi's log-convexity-preservation theorem.

\begin{theorem}\label{thm:main}
For every integer $d\ge1$, the Hoggatt transform
\begin{equation}\label{eq:hoggatt_q_transform}
 G_n(t)=\sum_{k=0}^n\Hog{n}{k}{d}f_k(t)
\end{equation}
preserves strong $t$-log-convexity of polynomial sequences with
nonnegative coefficients.
\end{theorem}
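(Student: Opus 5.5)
We follow the route described in the abstract: the Zhu--Sun criterion reduces log-convexity preservation to a positivity statement about the kernel, and that statement is proved by a one-sign-change argument.

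\emph{Step 1: the Zhu--Sun reduction.} Recall the Zhu--Sun criterion for strong $t$-log-convexity preservation by a triangle $[a_{n,k}]$. Expanding
\[
G_{n-1}G_{n+1}-G_n^2=\sum_{i,j}\bigl(a_{n-1,i}a_{n+1,j}-a_{n,i}a_{n,j}\bigr)f_if_j ,
\]
one symmetrizes over $(i,j)\leftrightarrow(j,i)$. One then groups terms with fixed $i+j=2m$ or $2m+1$ and uses the strong $t$-log-convexity of $\{f_k\}$: products $f_if_j$ with $i,j$ further apart dominate those closer together, coefficientwise in $t$. After this, it suffices to show that for every $n$ and every $\ell$, the partial sums
\[
S_r(n,\ell)=\sum_{\substack{i+j=\ell\\ |i-j|\ge r}}\bigl(a_{n-1,i}a_{n+1,j}+a_{n-1,j}a_{n+1,i}-2a_{n,i}a_{n,j}\bigr)
\]
are nonnegative for every threshold $r$, with the full sum ($r=0$) also nonnegative. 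Equivalently, the sequence of summands, indexed by $|i-j|$ from the extremes inward, changes sign at most once, from $+$ to $-$, and has nonnegative total. We also need the analogous statement for the mixed minors $G_{n-1}G_{m+1}-G_nG_m$, since strong log-convexity requires all of these. I would take $a_{n,k}=\Hog nkd$ and verify these two properties.

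\emph{Step 2: explicit form of the summands.} Writing $\Hog nkd=\prod_{j<d}\binom{n+j}{k}/\binom{k+j}{k}$, the ratio $\Hog{n+1}kd/\Hog nkd=\prod_{j<d}\frac{n+j+1}{n+j+1-k}$ is a rational function of $k$. Hence each summand factors as a positive quantity times
\[
R(i)+R(j)-2,\qquad R(k)=\prod_{j<d}\frac{n+j-k}{n+j}\Bigm/\prod_{j<d}\frac{n+j+1-k}{n+j+1},
\]
up to the normalization $a_{n-1,i}a_{n+1,j}/(a_{n,i}a_{n,j})$. Since $R$ is a product of monotone Möbius factors, the map $i\mapsto R(i)R(\ell-i)$ is log-concave/unimodal in $i$. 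This gives the single sign change: the summand is positive near the ends $|i-j|$ large, because a factor blows up as $k\to n$, and negative near the middle. I would check this using $\log R$ being convex in $k$ on $0\le k\le n$.

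\emph{Step 3: nonnegativity of the total.} The total $\sum_{i+j=\ell}(\cdots)$ is the coefficient of $x^\ell$ in $H_{n-1}(x)H_{n+1}(x)-H_n(x)^2$, where $H_n(x)=\sum_k\Hog nkd x^k$. So we need the row polynomials themselves to satisfy the inequality coefficientwise, i.e.\ coefficientwise $x$-log-convexity of the $d$-Hoggatt row polynomials. For $d\le 2$ this follows from Theorem~\ref{thm:theta_structure_intro} (Hankel total positivity at $\theta=0,1$). For general $d$ I would use the hypergeometric representation
\[
\Hog nkd=\frac{\prod_j (n+j)!\,j!}{\prod_j k!\,(n-k+j)!\,(k+j)!}\cdot k!^{\,\cdot}
\]
and interpret the ratio of the total to its largest term as a hitting probability of a hypergeometric urn (multivariate hypergeometric sampling), as the abstract suggests. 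The required inequality then says that one hitting probability dominates another, which I expect to prove by a coupling.

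\emph{Main obstacle.} The hard step is the mixed-minor version: showing one sign change together with a nonnegative total for $G_{n-1}G_{m+1}-G_nG_m$ with $m>n$, uniformly in $d$. In the mixed case the symmetric structure of Step 2 is lost. I would try to reduce to the case $m=n$ by telescoping the minors, but that requires preserving the sign pattern under summation. If telescoping fails, I would instead attack the mixed case directly through the hypergeometric hitting-probability inequality.
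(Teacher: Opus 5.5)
Your overall reduction is the right one: the Zhu--Sun criterion, requiring nonnegativity of the total plus a one-sign-change property of the symmetrized kernel coefficients for every pair $m\ge n$. But neither condition is actually verified, and the part you leave open is the technical core of the proof.

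Step~2 is wrong as stated. With $u(k)=R_d(n-k)/R_d(n)$ and $v(k)=R_d(n+1)/R_d(n-k+1)$, the normalized summand is $u(i)v(j)+u(j)v(i)-2$, a genuinely two-variable expression. Your $R(k)$ equals $u(k)v(k)$, so it only sees the diagonal products. Already for $d=1$ one has $R(k)=\frac{(n-k)(n+1)}{n(n+1-k)}\le 1$, so $R(i)+R(j)-2\le 0$ for all $i,j$. Yet the extreme summand $\binom{n+1}{\ell}+\binom{n-1}{\ell}-2\binom n\ell=\binom{n-1}{\ell-2}$ is positive. In any case, unimodality of a product $R(i)R(\ell-i)$ would not control the sign of a sum minus $2$.

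The mixed case $m>n$, which you flag as the main obstacle, cannot be reduced to $m=n$. In the coefficientwise order there is no ratio chaining, and a sum of one-sign-change sequences need not have one sign change.

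What is missing is the content of Proposition~\ref{prop:C2}. In the interior range $l=h-k\le n$, put $a=n-l$, $b=n-k$, $c=m-n+1$ and $\lambda=R_d(n+c)/R_d(n)$. The normalized coefficient is then $(P_k-1)-\rho_k(1-Q_k)$. One shows that $T_k=(P_k-1)/(\rho_k(1-Q_k))=K(a,b)H_\lambda(a,b)$ is nonincreasing under $(a,b)\mapsto(a+1,b-1)$ as long as $P_{k+1}>1$.
\begin{itemize}
\item The factor $K$ is trivially monotone.
\item For $H_\lambda$, the cross-multiplied difference $E(\mu)$ is concave in $\mu$, so it suffices to check the endpoints $L<\lambda\le U$.
\item At $\mu=L$ the check is immediate.
\item At $\mu=U$ it reduces to $\Phi_d(a,b;c)\ge\Phi_d(a+1,b-1;c)$. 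This is the telescoped hypergeometric hitting inequality $F_d(x+1,r)F_d(x+1,r-2)\ge F_d(x,r)F_d(x+2,r-2)$, and it is needed even when $m=n$, i.e.\ $c=1$.
\end{itemize}
The boundary ranges $n<l\le m+1$ and the trailing zeros for $k>n$ are treated separately, using a weak-sign version of Zhu--Sun.

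So the hitting inequality serves (C2), not the total as your Step~3 suggests. The total, namely strong $t$-log-convexity of the Hoggatt row polynomials for all $m\ge n$, is imported in the paper from Chern--Shi's Theorem~1.2, and your coupling sketch does not replace it. Also, $d=1$ is not the $\theta=0$ case: $\theta=0$ gives $\binom nk^2$, whereas for $d=1$ the rows are $(1+x)^n$ and the total vanishes identically.
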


The preservation theorems  use the Zhu--Sun criterion and a mixed-minor
one-sign-change argument, whose key local ingredient is a
hypergeometric hitting-probability inequality. Neither transformation
theorem follows merely from the Hankel total positivity of the rows.

Section~\ref{ar:sec:criterion} supplies a continuant criterion.
Section~\ref{ar:sec:limits} proves the Gegenbauer Hankel theorem
and the Wang--Yang generalization for triangles and squares,
Section~\ref{ar:sec:extreme} treats arbitrary Hankel minors, and
Sections~\ref{sec:prelim}--\ref{sec:theta_preservation} prove the
preservation statements. The remaining sections discuss applications
and open questions. The two basic-hypergeometric deformations of the
classical construction are treated together in a separate paper
\cite{ZengQUltraspherical}.

\section{A leading-continuant criterion}\label{ar:sec:criterion}
For $n\ge1$, let
\begin{equation}\label{ar:eq:continuant}
 C_n(b_1,\ldots,b_{n-1};t)=
 \det\begin{pmatrix}
 1+t&1&&&\\ tb_1&1+t&1&&\\ &tb_2&1+t&\ddots&\\ &&\ddots&\ddots&1\\ &&&tb_{n-1}&1+t
 \end{pmatrix},
\end{equation}
with $C_0=1$ and $C_1=1+t$. Empty lists will be suppressed.
Consider the path on vertices $1,\ldots,n$, with edge $i$ joining $i$ to
$i+1$. A matching is a set of edges with no shared vertices, equivalently
a subset of $\{1,\ldots,n-1\}$ containing no consecutive indices.
In the determinant expansion, every nonzero term corresponds to fixed
points and disjoint adjacent transpositions. Each fixed point contributes
$1+t$, while the transposition $(i,i+1)$ contributes $-tb_i$,
including its permutation sign. Thus the expansion over matchings gives
\begin{equation}\label{ar:eq:matching}
 C_n(b;t)=\sum_M(-t)^{|M|}\prod_{i\in M}b_i\,(1+t)^{n-2|M|},
\end{equation}
where $M$ ranges over these matchings. For example,
\[
 C_4(b_1,b_2,b_3;t)=(1+t)^4
 -t(b_1+b_2+b_3)(1+t)^2+t^2b_1b_3,
\]
where the last term corresponds to the matching consisting of edges $1$
and $3$. It follows that
\begin{equation}\label{ar:eq:derivative}
 \frac{\partial C_n}{\partial b_i}
 =-t\,C_{i-1}(b_1,\ldots,b_{i-2};t)
 C_{n-i-1}(b_{i+2},\ldots,b_{n-1};t).
\end{equation}
Both identities also follow by expanding the determinant along the edge indexed by $i$.

\begin{lemma}[Telescoping identity for continuants]\label{ar:lem:telescoping}
For arbitrary real weights $a_i,b_i$ and $n\ge2$,
\begin{align}
 C_n(b_1,\ldots,b_{n-1};t)
 &=C_n(a_1,\ldots,a_{n-1};t)\notag\\
 &\quad+t\sum_{i=1}^{n-1}(a_i-b_i)
 C_{i-1}(a_1,\ldots,a_{i-2};t)
 C_{n-i-1}(b_{i+2},\ldots,b_{n-1};t).
 \label{ar:eq:telescoping}
\end{align}
Empty lists are interpreted using $C_0=1$ and $C_1=1+t$.
\end{lemma}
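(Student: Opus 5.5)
The natural route is a telescoping sum over hybrid weight sequences that switch from $a$ to $b$ one index at a time. For $0\le i\le n-1$, define the hybrid list
\[
 w^{(i)}=(a_1,\ldots,a_i,b_{i+1},\ldots,b_{n-1}),
\]
so that $w^{(0)}=b$ and $w^{(n-1)}=a$. Then
\[
 C_n(b;t)-C_n(a;t)=\sum_{i=1}^{n-1}\bigl(C_n(w^{(i-1)};t)-C_n(w^{(i)};t)\bigr),
\]
and the two lists $w^{(i-1)}$ and $w^{(i)}$ differ only in position $i$: the first has $b_i$, the second $a_i$.

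Each consecutive difference is computed from the matching expansion \eqref{ar:eq:matching}. There $C_n$ is affine in each single weight $b_i$, since $b_i$ appears at most once in any product. So $C_n$ changes under $b_i\mapsto a_i$ by $(a_i-b_i)\,\partial C_n/\partial b_i$, evaluated at the remaining weights, exactly and with no error term. By \eqref{ar:eq:derivative}, this derivative equals $-t\,C_{i-1}(\cdot)C_{n-i-1}(\cdot)$. In the hybrid list, the left block uses the weights of edges $1,\ldots,i-2$, which are $a_1,\ldots,a_{i-2}$. The right block uses the weights of edges $i+2,\ldots,n-1$, which are $b_{i+2},\ldots,b_{n-1}$. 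Hence
\[
 C_n(w^{(i-1)};t)-C_n(w^{(i)};t)=t\,(a_i-b_i)\,C_{i-1}(a_1,\ldots,a_{i-2};t)\,C_{n-i-1}(b_{i+2},\ldots,b_{n-1};t).
\]
The sign is right: the difference is old minus new, and $-(a_i-b_i)\cdot(-t)=t(a_i-b_i)$. Summing over $i$ then gives \eqref{ar:eq:telescoping}.

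The only genuinely delicate point is the derivative formula \eqref{ar:eq:derivative} itself, together with its boundary cases $i=1$ and $i=n-1$. I would verify it directly from \eqref{ar:eq:matching}. Matchings containing edge $i$ correspond bijectively to pairs of matchings on the vertex paths $\{1,\ldots,i-1\}$ and $\{i+2,\ldots,n\}$, since vertices $i$ and $i+1$ are removed. Such a pair uses edges $1,\ldots,i-2$ and $i+2,\ldots,n-1$ respectively. The factor $(1+t)^{n-2|M|}$ splits as $(1+t)^{(i-1)-2|M_1|}(1+t)^{(n-i-1)-2|M_2|}$, and the extra $-t b_i$ accounts for the edge $i$ itself. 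On the empty-list conventions: a path with $0$ vertices gives $C_0=1$, and a path with $1$ vertex gives $C_1=1+t$. This matches the stated conventions, so the endpoints $i=1$ and $i=n-1$ need no separate treatment.
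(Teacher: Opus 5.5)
Your proof is correct and follows the paper's argument. Your hybrid lists $w^{(i)}=(a_1,\ldots,a_i,b_{i+1},\ldots,b_{n-1})$ are exactly the intermediate states in the paper's successive replacement of $a_{n-1},\ldots,a_1$ by $b_{n-1},\ldots,b_1$, and you evaluate each step the same way, by affinity in a single weight together with \eqref{ar:eq:derivative}; your matching bijection, including the $C_0$ and $C_1$ boundary cases, correctly verifies the derivative formula that the paper takes directly from \eqref{ar:eq:matching}.
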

\begin{proof}
Replace $a_{n-1},a_{n-2},\ldots,a_1$ successively by
$b_{n-1},b_{n-2},\ldots,b_1$ in the continuant $C_n$.
Here a continuant is a tridiagonal determinant, in this case the determinant
defined in \eqref{ar:eq:continuant}. By the matching expansion
\eqref{ar:eq:matching}, each weight occurs at most once in every term.
Thus $C_n$ is affine in each weight: with all other weights fixed,
it has degree at most one in that weight. Replacing $a_i$ by $b_i$
therefore changes the determinant by exactly $(b_i-a_i)$ times its
partial derivative with respect to the $i$th weight. Differentiation
with respect to edge $i$ gives
\[
 \frac{\partial}{\partial a_i}
 C_n(a_1,\ldots,a_i,b_{i+1},\ldots,b_{n-1};t)
 =-t C_{i-1}(a_1,\ldots,a_{i-2};t)
 C_{n-i-1}(b_{i+2},\ldots,b_{n-1};t).
\]
At the replacement of edge $i$ the left weights still equal $a$,
whereas the right weights already equal $b$. Summing the changes
proves the identity.
\end{proof}

\begin{theorem}[Leading-continuant criterion]\label{ar:thm:criterion}
Let $a_1\ge a_2\ge\cdots\ge0$, and suppose that
\begin{equation}\label{ar:eq:leadinghyp}
 C_n(a_1,\ldots,a_{n-1};t)\coef0\qquad(n\ge0).
\end{equation}
Then for every $n$ and every $0\le b_i\le a_i$,
\begin{equation}\label{ar:eq:box}
 C_n(b_1,\ldots,b_{n-1};t)
 \coef C_n(a_1,\ldots,a_{n-1};t)\coef0.
\end{equation}
Consequently, if an infinite sequence $b$ satisfies $0\le b_i\le a_i$, the tridiagonal matrix
\begin{equation}\label{ar:eq:Jb}
 J_b(t)=\begin{pmatrix}
 1+t&1&&\\ tb_1&1+t&1&\\ &tb_2&1+t&\ddots\\ &&\ddots&\ddots
 \end{pmatrix}
\end{equation}
is coefficientwise totally positive.
\end{theorem}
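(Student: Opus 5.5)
The plan has two parts: first the box inequality \eqref{ar:eq:box}, by induction on $n$ using the telescoping identity of Lemma~\ref{ar:lem:telescoping}; then total positivity of $J_b(t)$, by reducing every minor to continuants of the same shape with weights still in the box.

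\textbf{Box inequality.} We induct on $n$; the cases $n\le1$ are trivial since $C_0=1$ and $C_1=1+t$ do not involve the weights. For $n\ge2$, apply \eqref{ar:eq:telescoping} with the given $a$ and $b$. Each correction term is
\[
t(a_i-b_i)\,C_{i-1}(a_1,\ldots,a_{i-2};t)\,C_{n-i-1}(b_{i+2},\ldots,b_{n-1};t).
\]
The scalar $a_i-b_i$ is nonnegative. The factor $C_{i-1}(a_1,\ldots,a_{i-2};t)$ is a leading continuant, hence $\coef0$ by \eqref{ar:eq:leadinghyp}.

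The remaining factor is a shorter continuant in shifted weights $b_{i+2},\ldots$. We compare it with the \emph{leading} continuant $C_{n-i-1}(a_1,\ldots,a_{n-i-2};t)$. This is where monotonicity enters: since $a$ is decreasing, $b_{i+1+j}\le a_{i+1+j}\le a_j$. So the shifted list $(b_{i+2},\ldots,b_{n-1})$ lies in the box $[0,a_1]\times\cdots\times[0,a_{n-i-2}]$. By the induction hypothesis (length $n-i-1<n$), $C_{n-i-1}(b_{i+2},\ldots;t)\coef C_{n-i-1}(a_1,\ldots;t)\coef0$.

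Thus every correction term is a product of coefficientwise nonnegative polynomials, so it is itself $\coef0$. Hence $C_n(b;t)\coef C_n(a;t)\coef0$.

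The induction must be stated for \emph{all} lists in the box, not only for the fixed $b$, so that the shifted lists are covered. I expect this shift-and-compare step to be the only real subtlety. It is exactly why the hypothesis is imposed only on leading continuants, together with $a$ decreasing.

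\textbf{Total positivity of $J_b(t)$.} The matrix $J_b(t)$ is tridiagonal with nonnegative off-diagonal entries $1$ and $tb_i$. For such matrices it is standard that every minor factors into a product of two kinds of pieces:
\begin{itemize}[label=$\cdot$]
\item off-diagonal entries, which are monomials with nonnegative coefficients;
\item principal contiguous minors.
\end{itemize}
To see this, note that a minor with rows $I$ and columns $J$ is either zero or block triangular. Its diagonal blocks are either contiguous principal submatrices or single off-diagonal entries, because $|i-j|\le1$ forces the row and column index sets to interlace.

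So it suffices to show that every contiguous principal minor is $\coef0$. The principal minor on rows and columns $m+1,\ldots,m+\ell$ equals $C_\ell(b_{m+1},\ldots,b_{m+\ell-1};t)$. Its weights satisfy $b_{m+j}\le a_{m+j}\le a_j$, so they lie in the box of the leading weights. The first part then gives $C_\ell(b_{m+1},\ldots;t)\coef0$.

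Therefore all minors of every finite truncation of $J_b(t)$ are coefficientwise nonnegative, which is the asserted coefficientwise total positivity.
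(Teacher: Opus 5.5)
Your proposal is correct and follows essentially the same route as the paper. Both arguments prove the box inequality by induction over the whole box at each size using the telescoping identity, with the decreasing hypothesis $b_{i+1+j}\le a_{i+1+j}\le a_j$ placing the shifted right factor back into a smaller box. Both then reduce every minor of $J_b(t)$ to off-diagonal entries and contiguous principal continuants, except that the paper cites the standard tridiagonal criterion where you sketch the block-triangular factorization directly.
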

\begin{proof}
We induct on $n$, proving the assertion for the entire box
$\prod_{i=1}^{n-1}[0,a_i]$ at each size. The cases $n=0,1$ are
immediate. Apply Lemma~\ref{ar:lem:telescoping}. The left factor in each
summand of \eqref{ar:eq:telescoping} is a leading continuant and is
coefficientwise nonnegative by \eqref{ar:eq:leadinghyp}. For the right
factor, its $j$th weight satisfies
\[
 0\le b_{i+1+j}\le a_{i+1+j}\le a_j.
\]
Its size is smaller than $n$, so the induction hypothesis makes it
coefficientwise nonnegative. Since $a_i-b_i\ge0$, every summand is
coefficientwise nonnegative. Thus \eqref{ar:eq:telescoping} proves
\eqref{ar:eq:box} and provides a recursive positive certificate for it.
The induction concerns all lower-size boxes and does not assume
positivity of the shifted continuants being proved.

A contiguous principal block beginning at row $s$ has determinant
$C_n(b_{s+1},\ldots,b_{s+n-1};t)$. Its $i$th edge weight is bounded by $a_{s+i}\le a_i$, so \eqref{ar:eq:box} applies. All these determinants are nonnegative coefficientwise, as are the entries of $J_b$. The standard tridiagonal criterion now gives total positivity: all minors of a tridiagonal matrix are nonnegative if its entries and its contiguous principal minors are nonnegative. This criterion is valid over a commutative partially ordered ring; see \cite[Lemma 2.4 and Corollary 2.5]{DDPS}. Equivalently, successive Laplace expansions factor each nonzero minor into off-diagonal entries and contiguous principal determinants. The expansions use only products and sums, so the criterion applies to coefficientwise order.
\end{proof}

\begin{corollary}\label{ar:cor:one}
If $0\le b_i\le1$ for every $i$, then $J_b(t)$ is coefficientwise totally positive.
\end{corollary}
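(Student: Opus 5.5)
The plan is to apply Theorem~\ref{ar:thm:criterion} with the constant comparison sequence $a_i=1$ for all $i$. This sequence is trivially nonincreasing and nonnegative. Once the hypothesis \eqref{ar:eq:leadinghyp} is verified for it, the theorem gives coefficientwise total positivity of $J_b(t)$ for every $b$ with $0\le b_i\le 1$. So the only real work is to show
\[
 C_n(1,\ldots,1;t)\coef 0\qquad(n\ge0).
\]

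To do this, I would identify $C_n(1,\ldots,1;t)$ in closed form. Expanding along the last row gives the recurrence
\[
 C_n=(1+t)C_{n-1}-tC_{n-2},\qquad C_0=1,\ C_1=1+t.
\]
The characteristic roots are $1$ and $t$. Matching the initial values then gives
\[
 C_n(1,\ldots,1;t)=1+t+t^2+\cdots+t^n.
\]
The induction is immediate: $(1+t)(1+\cdots+t^{n-1})-t(1+\cdots+t^{n-2})=1+\cdots+t^n$. Its coefficients are all equal to $1$, so \eqref{ar:eq:leadinghyp} holds. Note that the matching expansion \eqref{ar:eq:matching} alone does not show nonnegativity, because it has alternating signs. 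The closed form is what settles it.

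With the hypothesis checked, Theorem~\ref{ar:thm:criterion} applies directly and yields the corollary. There is no serious obstacle. The only point to get right is the recurrence, namely that expanding the tridiagonal determinant along its last row produces the off-diagonal product $tb_{n-1}\cdot 1$ with a minus sign, which gives $-tC_{n-2}$ when $b_{n-1}=1$.
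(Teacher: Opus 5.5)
Your proposal is correct and is essentially the paper's proof: take the constant sequence $a_i=1$ in Theorem~\ref{ar:thm:criterion}, use the continuant recurrence to get $C_n(1,\ldots,1;t)=1+t+\cdots+t^n$, and conclude. Your explicit verification of the recurrence and the closed form fills in the step that the paper states in one line.
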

\begin{proof}
For the constant sequence $a_i=1$, the continuant recurrence gives
$C_n(1,\ldots,1;t)=1+t+\cdots+t^n$. Apply Theorem~\ref{ar:thm:criterion}.
\end{proof}

We next recall the production-matrix transfer in a form requiring no divisions.
\begin{proposition}\label{ar:prop:transfer}
Suppose $b_i\ge0$ and $J_b(t)$ is coefficientwise totally positive. Set
\[
 O_{nk}=(J_b(t)^n)_{0k},\qquad B_n=(J_b(t)^n)_{00}.
\]
Then $O$ and the Hankel matrix $(B_{i+j})_{i,j\ge0}$ are coefficientwise totally positive.
\end{proposition}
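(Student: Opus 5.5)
The plan is the standard production-matrix argument (Sokal and coauthors; cf.\ \cite{PSZ,DDPS}), written so that no division is ever needed. Let $J=J_b(t)$. The identity $J^{n+1}=J^nJ$ gives, entrywise,
\[
 O_{n+1,k}=\sum_{j}O_{nj}J_{jk},
\]
so the output matrix $O$ satisfies $\Delta O=OJ$, where $\Delta$ is the forward shift deleting the first row. I will write the order-$N$ truncation recursively as
\[
 O_{\le N}=\begin{pmatrix}1&0\\0&O_{\le N-1}\end{pmatrix}\cdot\bigl(\text{lower part of } J\text{-multiplication}\bigr).
\]
The cleaner route is the explicit product
\[
 O=\cdots\,\bigl(I_2\oplus J\bigr)\bigl(I_1\oplus J\bigr)J
\]
applied to the unit row vector, i.e.\ the usual factorization $O=\lim_N \prod_{m=N}^{0}(I_m\oplus J)$ restricted to the first $N$ rows. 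Because $J$ is lower Hessenberg and tridiagonal, only finitely many factors affect any fixed finite block. Each factor $I_m\oplus J$ is coefficientwise totally positive, since $J$ is by hypothesis and a direct sum with an identity preserves total positivity. By Cauchy--Binet, which uses only sums and products and is therefore valid over the coefficientwise-ordered ring $\R[t]$, the finite products are totally positive. Every minor of $O$ is eventually a minor of such a finite product, so $O$ is totally positive.

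For the Hankel matrix, the plan is to use $J^{i+j}=J^iJ^j$. Taking the $(0,0)$ entry gives
\[
 B_{i+j}=\sum_k (J^i)_{0k}(J^j)_{k0}.
\]
The difficulty is that $(J^j)_{k0}$ is not an entry of $O$. The standard remedy is a diagonal similarity. Let
\[
 D=\diag(1,\,tb_1,\,t^2b_1b_2,\ldots).
\]
The transpose $J^{T}$ equals $D^{-1}JD$ formally, but this route introduces division, so I will avoid it. Instead I will show directly that
\[
 (J^j)_{k0}=O_{jk}\,w_k,\qquad w_k=\prod_{i\le k} tb_i.
\]
This follows by a path count: both sides enumerate Motzkin paths from height $0$ to height $k$, with up-steps weighted $1$ and down-steps weighted $tb_i$. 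Reversing a path swaps up- and down-steps, and the net change is exactly the product of the weights $tb_i$ crossing levels $1$ through $k$. Since $J$ is tridiagonal, this identity holds as a polynomial identity without any division. It yields the factorization
\[
 H=(B_{i+j})=O\,W\,O^{T},\qquad W=\diag(w_0,w_1,\ldots),
\]
where $W$ has coefficientwise nonnegative entries because $b_i\ge0$. A nonnegative diagonal matrix is totally positive, and $O^{T}$ is totally positive because $O$ is. The sum over $k$ is finite for each entry, since $O$ is lower triangular. Cauchy--Binet on the leading $N\times N$ truncations, where only $k\le N$ contributes, then gives total positivity of $H$.

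The main obstacle is making the infinite products rigorous and checking the path-reversal weight identity carefully, including boundary indices, without dividing. I expect the path-counting formula for $(J^n)_{ij}$ to handle both points cleanly.
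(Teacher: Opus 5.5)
Your treatment of the Hankel matrix coincides with the paper's: the division-free path-reversal identity $(J^n)_{k0}=w_k\,(J^n)_{0k}$ (your $w_k$ is the paper's $d_k$), the factorization $H=OWO^{\mathsf T}$, and Cauchy--Binet. The gap is in the total positivity of $O$, on which that factorization depends.

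The product you display, $\cdots(I_2\oplus J)(I_1\oplus J)J$, is not the output matrix. Every factor $I_m\oplus J$ with $m\ge1$ has first row $(1,0,0,\ldots)$, so the first row of your product is the first row of $J$, namely $(1+t,1,0,\ldots)$, whereas $O_{0k}=\delta_{0k}$. Nor is it $O$ with its first row deleted: its $(1,0)$ entry is $tb_1(1+t)$, while $O_{20}=(1+t)^2+tb_1$.

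What the recurrence $O_{n+1,k}=\sum_jO_{nj}J_{jk}$ actually gives is $O=(I_1\oplus O)\widetilde J$. Here $\widetilde J$ denotes $J$ with the extra row $(1,0,0,\ldots)$ adjoined on top, so the rows shift down but the columns do not. Iterating gives $O=\cdots(I_2\oplus\widetilde J)(I_1\oplus\widetilde J)\widetilde J$. Its truncation is exactly the paper's recurrence $O_{N+1}=\diag(1,O_N)T_N$, and $T_N$ is what your unspecified ``lower part of $J$-multiplication'' must be.

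With this correction you need one further lemma, which your proposal lacks. Since $\widetilde J$ is not a direct sum of an identity with $J$, ``direct sums preserve total positivity'' does not cover it. You must check directly that $\widetilde J$ is coefficientwise totally positive:
\begin{itemize}
\item a minor of $\widetilde J$ avoiding the new row is a minor of $J$;
\item a minor using the new row but not column $0$ has a zero row, so it vanishes;
\item a minor using both expands along the new row, with sign $+$, into a minor of $J$.
\end{itemize}
Once this is in place, the first $N+1$ rows of $O$ are those of the finite product of the first $N+1$ factors. Cauchy--Binet then gives total positivity of $O$ as you intended, and the rest of your argument goes through.
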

\begin{proof}
Write $O_N$ for the square output array with row and column indices $0,\ldots,N$. Let $T_N$ be the $(N+2)\times(N+2)$ matrix whose first row is $(1,0,\ldots,0)$ and whose remaining rows are the first $N+1$ rows of $J_b$, restricted to columns $0,\ldots,N+1$. Every minor of $T_N$ either is a minor of $J_b$, is zero, or expands along the first row into a minor of $J_b$ with positive sign. Thus $T_N$ is totally positive. The output recurrence gives
\[
 O_{N+1}=\diag(1,O_N)T_N.
\]
Induction and Cauchy--Binet show that $O_N$ is totally positive for all $N$.

Let $d_0=1$ and $d_k=t^k\prod_{j=1}^k b_j$. Reversal of tridiagonal paths gives
$(J_b^n)_{k0}=d_k(J_b^n)_{0k}$. This identity holds even when some $b_j$ vanish, since it can be verified term by term without division. Matrix multiplication therefore yields
\begin{equation}\label{ar:eq:factorH}
 B_{i+j}=\sum_{k\ge0}O_{ik}d_kO_{jk},\qquad
 H=O\diag(d_0,d_1,\ldots)O^{\mathsf T}.
\end{equation}
The sums relevant to any finite minor are finite: $O_{ik}=0$ for $k>i$. Another application of Cauchy--Binet proves the Hankel assertion.
\end{proof}

\section{Classical Gegenbauer moments and Narayana polynomials}
\label{ar:sec:limits}
The normalized ultraspherical measure $\nu_\theta$ of
\eqref{eq:classical_moment_bridge_intro} has even moments
$\int x^{2j}d\nu_\theta(x)=(\frac12)_j/(\theta+1)_j$ and vanishing
odd moments. Expanding \eqref{eq:classical_affine_intro} gives
\begin{equation}\label{ar:eq:classical}
P_n^{(\theta)}(t)=\sum_{j=0}^{\lfloor n/2\rfloor}
\frac{n!}{j!(n-2j)!(\theta+1)_j}t^j(1+t)^{n-2j}.
\end{equation}
The coefficient of $t^k$ is obtained from Chu--Vandermonde:
\[
\sum_{j\ge0}\frac{(-k)_j(-(n-k))_j}{(\theta+1)_j j!}
 =\frac{(\theta+1+n-k)_k}{(\theta+1)_k}.
\]
Consequently,
\begin{equation}\label{ar:eq:classicalcoeff}
P_n^{(\theta)}(t)=\sum_{k=0}^n
 \binom nk\frac{(\theta+1)_n}{(\theta+1)_k(\theta+1)_{n-k}}t^k.
\end{equation}

\subsection{A direct classical Hankel-positivity proof}
The monic Gegenbauer polynomials in the variable $y=2x$ obey
\[
 p_{n+1}(y)=yp_n(y)-\rho_n(\theta)p_{n-1}(y),\qquad
 \rho_n(\theta)=\frac{n(n+2\theta-1)}{(n+\theta-1)(n+\theta)}
 \quad(n\ge1,\ \theta>0),
\]
where $p_0=1$, $p_1=y$. In particular,
$\rho_1(\theta)=2/(1+\theta)$.
After the affine change $y\mapsto1+t+\sqrt t\,y$,
the Jacobi production matrix has diagonal $1+t$, superdiagonal $1$,
and subdiagonal $t\rho_1,t\rho_2,\ldots$.
Thus, in the notation of Proposition~\ref{ar:prop:transfer},
\begin{equation}\label{ar:eq:classical-J}
 P_n^{(\theta)}(t)=(J_{\rho(\theta)}(t)^n)_{00}.
\end{equation}

\begin{lemma}\label{ar:lem:gegenbauer-leading}
For $\theta>0$, the leading continuant of order $n$ is
\begin{equation}\label{ar:eq:gegenbauer-leading}
 E_n^{(\theta)}(t)=\frac{n!}{(\theta)_n}
 \sum_{k=0}^n\frac{(\theta)_k(\theta)_{n-k}}{k!(n-k)!}t^k,
 \qquad E_0^{(\theta)}=1.
\end{equation}
Every coefficient is strictly positive. As $\theta\downarrow0$,
$E_0^{(0)}=1$ and $E_n^{(0)}=1+t^n$ for $n\ge1$.
\end{lemma}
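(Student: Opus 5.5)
The plan is to verify that the proposed closed form satisfies the continuant three-term recurrence. By \eqref{ar:eq:continuant}, expanding along the last row gives
\[
 C_{n+1}(b_1,\ldots,b_n;t)=(1+t)C_n(b_1,\ldots,b_{n-1};t)-tb_nC_{n-1}(b_1,\ldots,b_{n-2};t).
\]
With $b_n=\rho_n(\theta)$ the leading continuant $E_n^{(\theta)}$ is therefore determined by
\[
 E_{n+1}=(1+t)E_n-t\rho_nE_{n-1},\qquad E_0=1,\ E_1=1+t.
\]
It suffices to check that the right side of \eqref{ar:eq:gegenbauer-leading} has these initial values and obeys this recurrence. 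The initial values are immediate: for $n=1$ the formula gives $\frac{1}{\theta}(\theta+\theta t)=1+t$.

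To organize the recurrence check, I would write $E_n=\frac{n!}{(\theta)_n}F_n$, where
\[
 F_n(t)=\sum_k c_kc_{n-k}t^k,\qquad c_k=\frac{(\theta)_k}{k!}.
\]
Thus $F_n$ is the coefficient of $z^n$ in $(1-z)^{-\theta}(1-tz)^{-\theta}$. This generating function $\Phi(z)$ satisfies the first-order ODE
\[
 \Phi'/\Phi=\theta/(1-z)+\theta t/(1-tz).
\]
Clearing denominators gives
\[
 (1-z)(1-tz)\Phi'=\theta\bigl((1+t)-2tz\bigr)\Phi.
\]
Extracting the coefficient of $z^n$ yields a three-term recurrence:
\[
 (n+1)F_{n+1}=(n+\theta)(1+t)F_n-t(n-1+2\theta)F_{n-1}.
\]
Here the $z^n$ terms give $(n+1)F_{n+1}-(1+t)nF_n+t(n-1)F_{n-1}$ on the left and $\theta(1+t)F_n-2\theta tF_{n-1}$ on the right.

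Next I would substitute $F_n=\frac{(\theta)_n}{n!}E_n$ and multiply through by $\frac{n!}{(\theta)_{n}(n+\theta)}$. This gives
\[
 E_{n+1}=(1+t)E_n-t\,\frac{n(n-1+2\theta)}{(n+\theta-1)(n+\theta)}E_{n-1}.
\]
The coefficient here is exactly $t\rho_n(\theta)$, so the recurrence matches. The only care needed is the Pochhammer bookkeeping: $(\theta)_{n+1}=(\theta)_n(\theta+n)$ and $(\theta)_{n-1}=(\theta)_n/(\theta+n-1)$. This index matching is the one place where an off-by-one error could occur, so I would double-check it against the hand computation of $E_2=(1+t)^2-2t/(1+\theta)$. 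The formula gives $\frac{2}{\theta(\theta+1)}\bigl(\tfrac{\theta(\theta+1)}{2}(1+t^2)+\theta^2t\bigr)$, which agrees.

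Strict positivity is clear for $\theta>0$, since all the Pochhammer symbols are positive. For the limit $\theta\downarrow0$, the middle terms with $1\le k\le n-1$ contain $(\theta)_k(\theta)_{n-k}/(\theta)_n$. This ratio is $O(\theta^2)/\theta\to0$, because $(\theta)_m\sim\theta(m-1)!$. The end terms $k=0,n$ contribute $\frac{n!}{(\theta)_n}\frac{(\theta)_n}{n!}=1$ each. Hence $E_n^{(0)}=1+t^n$ for $n\ge1$, which is consistent with $\rho_1(0)=2$ and $\rho_n(0)=1$ for $n\ge2$. I expect no real obstacle beyond the coefficient extraction.
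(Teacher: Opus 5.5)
Your proof is correct and is essentially the paper's argument: both rest on the generating function $(1-z)^{-\theta}(1-tz)^{-\theta}$ for the unnormalized sums, and both match the resulting three-term recurrence with the continuant recurrence $E_{n+1}=(1+t)E_n-t\rho_n(\theta)E_{n-1}$. The one difference is how that recurrence is obtained. The paper substitutes $x=(1+t)/(2\sqrt t)$, $z=u\sqrt t$ into the Gegenbauer generating function and cites the Gegenbauer recurrence, whereas you derive it directly from the first-order ODE; this avoids $\sqrt t$ and makes explicit the Pochhammer bookkeeping that the paper leaves implicit.
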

\begin{proof}
The Gegenbauer generating function
$\sum_{n\ge0}C_n^{(\theta)}(x)z^n=(1-2xz+z^2)^{-\theta}$
gives, on substituting $x=(1+t)/(2\sqrt t)$ and $z=u\sqrt t$,
\[
 \sum_{n\ge0}t^{n/2}C_n^{(\theta)}
 \!\left(\frac{1+t}{2\sqrt t}\right)u^n
 =(1-u)^{-\theta}(1-tu)^{-\theta}.
\]
The monic normalization $n!/(\theta)_n$ yields
\eqref{ar:eq:gegenbauer-leading}. The Gegenbauer three-term recurrence
shows that these polynomials satisfy
$E_0=1$, $E_1=1+t$ and
$E_{n+1}=(1+t)E_n-t\rho_n(\theta)E_{n-1}$;
therefore they are exactly the leading continuants of $J_\rho$.
Coefficient positivity and the limiting formulas are immediate.
\end{proof}

For $r\ge0$, let $J_\theta^{[r]}(t)$ be the Jacobi matrix obtained
from $J_{\rho(\theta)}(t)$ by deleting its first $r$ rows and
columns. Explicitly,
\begin{equation}\label{ar:eq:associated-Jacobi}
 J_\theta^{[r]}(t)=
 \begin{pmatrix}
 1+t & 1 & 0 & \cdots\\
 t\rho_{r+1}(\theta) & 1+t & 1 & \cdots\\
 0 & t\rho_{r+2}(\theta) & 1+t & \ddots\\
 \vdots & \vdots & \ddots & \ddots
 \end{pmatrix},\qquad
 P_n^{(\theta,r)}(t)=\bigl((J_\theta^{[r]}(t))^n\bigr)_{00}.
\end{equation}
At $\theta=0$ we use the limiting values
$\rho_1(0)=2$ and $\rho_j(0)=1$ for $j\ge2$.
We call $\big(P_n^{(\theta,r)}(t)\big)_{n\ge0}$ the $r$th
\emph{associated Jacobi tail}; thus
$P_n^{(\theta,0)}(t)=P_n^{(\theta)}(t)$.
In general, these tails are not the shifted moment sequences
$\big(P_{n+r}^{(\theta)}(t)\big)_{n\ge0}$.

\begin{theorem}\label{ar:cor:classical}
For every $\theta\ge0$ and $r\ge0$, the moment sequence
$\big(P_n^{(\theta,r)}(t)\big)_{n\ge0}$ is
coefficientwise Hankel-totally positive.
\end{theorem}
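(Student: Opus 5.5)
The plan is to reduce the statement to Proposition~\ref{ar:prop:transfer}. By \eqref{ar:eq:associated-Jacobi}, $P_n^{(\theta,r)}(t)=((J_\theta^{[r]}(t))^n)_{00}$, and $J_\theta^{[r]}(t)=J_b(t)$ with $b_i=\rho_{r+i}(\theta)\ge0$. So it suffices to show that $J_\theta^{[r]}(t)$ is coefficientwise totally positive. The Hankel assertion then follows from the factorization \eqref{ar:eq:factorH} and Cauchy--Binet.

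For total positivity we apply Theorem~\ref{ar:thm:criterion} with the comparison sequence $a_i=\rho_i(\theta)$, $\theta>0$. By Lemma~\ref{ar:lem:gegenbauer-leading}, its leading continuants are $E_n^{(\theta)}(t)$, which have strictly positive coefficients, so hypothesis \eqref{ar:eq:leadinghyp} holds. Two further inputs are needed.
\begin{enumerate}[label=(\roman*)]
\item The sequence $a_i$ must be weakly decreasing and nonnegative.
\item The tail weights must satisfy $b_i=\rho_{r+i}\le\rho_i=a_i$.
\end{enumerate}
Condition (ii) follows from (i). Once both hold, the theorem gives total positivity of $J_b=J_\theta^{[r]}$ for every $r\ge0$.

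The step needing an honest computation is (i), the monotonicity of $\rho_n(\theta)=\frac{n(n+2\theta-1)}{(n+\theta-1)(n+\theta)}$ in $n$. Write $m=n+\theta-\tfrac12$. Then the numerator is $m^2-(\theta-\tfrac12)^2$ and the denominator is $m^2-\tfrac14$, so
\[
\rho_n(\theta)=1-\frac{(\theta-\tfrac12)^2-\tfrac14}{m^2-\tfrac14}=1-\frac{\theta(\theta-1)}{(n+\theta-1)(n+\theta)}.
\]
The denominator $(n+\theta-1)(n+\theta)$ is positive and increasing for $n\ge1$, $\theta>0$.
\begin{itemize}
\item For $\theta\ge1$, $\rho_n$ increases in $n$ toward $1$, so the decreasing hypothesis fails with $a=\rho$.
\item For $0<\theta\le1$, $\rho_n$ decreases toward $1$, and the argument above goes through directly.
\end{itemize}

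So the obstacle is the regime $\theta>1$. There we would instead use Corollary~\ref{ar:cor:one}: for $\theta\ge1$ we have $0\le\rho_n(\theta)\le1$ for all $n$. Indeed, $\theta(\theta-1)\ge0$ gives $\rho_n\le1$, and $n+2\theta-1>0$ gives $\rho_n\ge0$. All tails $b_i=\rho_{r+i}$ then lie in $[0,1]$, and Corollary~\ref{ar:cor:one} gives total positivity of $J_\theta^{[r]}$ at once.

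For $0<\theta<1$, all $\rho_n$ exceed $1$, with $\rho_1=2/(1+\theta)$ largest. The criterion with $a=\rho$ is then essential, since Corollary~\ref{ar:cor:one} does not apply. The tail bound $\rho_{r+i}\le\rho_i$ holds by the monotone decrease.

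Finally, at $\theta=0$ we use the limiting weights $\rho_1=2$ and $\rho_j=1$ for $j\ge2$. These are weakly decreasing, and their leading continuants are $E_n^{(0)}=1+t^n$, which is coefficientwise nonnegative by the limiting case of Lemma~\ref{ar:lem:gegenbauer-leading}. For $r\ge1$ the tail is the constant sequence $1$, covered by Corollary~\ref{ar:cor:one}. In every case $J_\theta^{[r]}$ is coefficientwise totally positive, and Proposition~\ref{ar:prop:transfer} finishes the proof.
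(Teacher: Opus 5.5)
Your proposal is correct and follows essentially the paper's own route. You use Corollary~\ref{ar:cor:one} for $\theta\ge1$, Theorem~\ref{ar:thm:criterion} with the Gegenbauer leading continuants for $0<\theta<1$, and then Proposition~\ref{ar:prop:transfer}. The only differences are that you handle the tails through the box bound $\rho_{r+i}\le\rho_i$ rather than by noting that $J_\theta^{[r]}$ is a submatrix of $J_{\rho(\theta)}$, and you treat $\theta=0$ directly with the weights $(2,1,1,\ldots)$ and $E_n^{(0)}=1+t^n$ instead of passing to the limit; both variations are valid.
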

\begin{proof}
For $\theta\ge1$, a direct calculation gives
\[
 \rho_n(\theta)-1=
 \frac{\theta(1-\theta)}{(n+\theta-1)(n+\theta)}\le0,
\]
so $0<\rho_n(\theta)\le1$. Corollary~\ref{ar:cor:one} makes the
production matrix coefficientwise totally positive.
For $0<\theta<1$ the same expression is positive and strictly
decreasing in $n$; hence
$\rho_1(\theta)>\rho_2(\theta)>\cdots>1$.
Lemma~\ref{ar:lem:gegenbauer-leading} verifies the leading-continuant
hypothesis of Theorem~\ref{ar:thm:criterion}, yielding the same
conclusion. Proposition~\ref{ar:prop:transfer} now proves Hankel total
positivity. Every shifted Jacobi matrix is a submatrix of the original
coefficientwise totally positive production matrix, so the argument
also applies to associated tails. Finally, letting $\theta\downarrow0$
gives the arcsine case with $\rho_1(0)=2$ and
$\rho_n(0)=1$ for $n\ge2$; each fixed minor is a finite polynomial in
finitely many recurrence coefficients, so positivity passes to the
limit. This is a proof entirely within the classical Gegenbauer theory.
\end{proof}

\subsection{The type \texorpdfstring{$A$}{A} and type \texorpdfstring{$B$}{B} endpoints}
At $\theta=1$, $\rho_n=1$ and the measure is the semicircle law.
At $\theta=0$, $\rho_1=2$, $\rho_n=1$ for $n\ge2$, and the measure
is the arcsine law. In particular,
\begin{equation}\label{ar:eq:typeA}
P_n^{(1)}(t)=\sum_{k=0}^n
 \frac{n+1}{(k+1)(n-k+1)}\binom nk^2t^k,
\end{equation}
\begin{equation}\label{ar:eq:typeB}
P_n^{(0)}(t)=\sum_{k=0}^n\binom nk^2t^k.
\end{equation}
\begin{remark}\label{ar:rem:associated-endpoints}
For $\theta=1$ all Jacobi coefficients satisfy $\rho_j(1)=1$,
so every associated tail is the type~$A$ Narayana moment sequence:
$P_n^{(1,r)}(t)=P_n^{(1)}(t)$ for all $r\ge0$.
For $\theta=0$, only the first coefficient differs:
$\rho_1(0)=2$ and $\rho_j(0)=1$ for $j\ge2$.
Consequently, $J_0^{[r]}(t)=J_1^{[0]}(t)$ whenever $r\ge1$, and
\[
 P_n^{(0,r)}(t)=P_n^{(1)}(t)
 \qquad(n\ge0,\ r\ge1).
\]
Thus the first and every subsequent Jacobi tail of the type~$B$
Narayana moments gives the type~$A$ moments.
\end{remark}
The principal determinants are
\begin{equation}\label{ar:eq:classical-det}
 \det[P_{i+j}^{(\theta)}(t)]_{0\le i,j\le m}
 =t^{\binom{m+1}{2}}
 \prod_{h=1}^m\rho_h(\theta)^{m+1-h},
\end{equation}
where the limiting values are used at $\theta=0$.
They specialize to $t^{\binom{m+1}{2}}$ for type~$A$ and
$2^m t^{\binom{m+1}{2}}$ for type~$B$.

\subsection{Total positivity of the interpolating Narayana triangles and squares}
\label{sec:wangyang_classical}
We now record a continuous-parameter version of the type~$A$ and
$B$ Narayana matrix theorems of Wang and Yang~\cite{WangYang2018}.
This follows from the same two structural methods used in their work:
P\'olya-frequency sequences for triangles and Stieltjes moments for
squares.  Here \emph{total positivity} means nonnegativity of all
minors, while \emph{strict total positivity} means positivity of all
minors.  The following statements extend to $\theta>-1$, even though
the Gegenbauer and preservation results elsewhere in this paper are
stated for $\theta\ge0$.

\begin{proof}[Proof of Theorem~\ref{thm:wangyang_intro}]
Put
\[
 w_n=n!(\theta+1)_n,\qquad
 a_n=\frac{1}{w_n}\quad(n\ge0),\qquad a_n=0\quad(n<0).
\]
For the triangle, the defining formula gives a positive diagonal
factorization
\[
 \mathcal N_\theta(n,k)
 =w_n\,a_{n-k}\,a_k,
 \qquad
 [\mathcal N_\theta(n,k)]_{n,k\ge0}
 =\operatorname{diag}(w_n)_{n\ge0}
   [a_{n-k}]_{n,k\ge0}
   \operatorname{diag}(a_k)_{k\ge0}.
\]
For $\theta>-1$, the classical Laguerre--P\'olya product
for the Bessel function gives
\[
 \sum_{n\ge0}a_n z^n
 ={}_0F_1(;\theta+1;z)
 =\prod_{j\ge1}\left(1+\frac{4z}{j_{\theta,j}^2}\right),
\]
where $j_{\theta,j}$ denotes the $j$th positive zero of $J_\theta$.
The positive numbers $4/j_{\theta,j}^2$ are summable.
Consequently $(a_n)_{n\ge0}$ is a P\'olya-frequency sequence of
infinite order by the Schoenberg--Edrei criterion (as used in
\cite{WangYang2018}).  Hence its Toeplitz matrix
$[a_{n-k}]_{n,k\ge0}$ is totally positive.  Positive diagonal
scaling proves the triangle assertion.

For the square we likewise have
\[
 \mathcal N_\theta(i+j,j)
 =\frac{w_{i+j}}{w_iw_j},
\]
so its matrix is positively diagonally equivalent to the Hankel
matrix $[w_{i+j}]_{i,j\ge0}$.  Let $X$ and $Y$ be independent gamma
random variables of respective shapes $1$ and $\theta+1$, both
with unit scale.  Then
\[
 \mathbb E[(XY)^n]
 =\mathbb E[X^n]\,\mathbb E[Y^n]
 =n!(\theta+1)_n=w_n.
\]
Thus $(w_n)$ is a Stieltjes moment sequence whose representing
measure has infinite support in $(0,\infty)$.  Its Hankel matrix
is strictly totally positive (for example, by the integral formula
for arbitrary minors using two generalized Vandermonde
determinants).  Positive diagonal scaling proves the square
assertion.
\end{proof}

\begin{remark}
At $\theta=0$ and $\theta=1$ we recover, respectively, the type~$B$
triangle $\binom nk^2$ and the shifted type~$A$ triangle
$N(n+1,k+1)$.  The usual non-shifted type~$A$ triangle is obtained
by replacing $n$ with $n-1$ in the latter for $n\ge1$:
\[
 \mathcal N_1(n-1,k)
 =\frac1n\binom nk\binom n{k+1}.
\]
Thus the shift is merely an indexing convention, not a different
total-positivity statement.  Notice also that neither matrix theorem
above implies the coefficientwise Hankel total positivity of the
row-polynomial sequence in Theorem~\ref{ar:cor:classical}.
\end{remark}

\subsection{Real-rootedness and strong \texorpdfstring{$t$}{t}-log-convexity}
\label{sec:interpolation}
The coefficient formula also gives
\[
 P_n^{(\theta)}(t)={}_2F_1\!\left(
 \begin{matrix}-n,-n-\theta\\\theta+1\end{matrix};t\right).
\]
\begin{corollary}\label{cor:theta_realrooted}
For every $\theta\ge0$ and $n\ge1$, $P_n^{(\theta)}(t)$
has only negative real zeros.
\end{corollary}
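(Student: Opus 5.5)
Corollary~\ref{cor:theta_realrooted} follows by combining the moment representation \eqref{eq:classical_affine_intro} with the orthogonality structure of Section~\ref{ar:sec:limits}. Taking $P_n^{(\theta)}(t)$ simply as a polynomial in $t$, the cleanest route uses the hypergeometric form together with the triangle factorization from the proof of Theorem~\ref{thm:wangyang_intro}. By \eqref{ar:eq:classicalcoeff},
\[
P_n^{(\theta)}(t)=n!(\theta+1)_n\sum_{k=0}^n a_k\,a_{n-k}\,t^k,\qquad a_k=\frac{1}{k!(\theta+1)_k}.
\]
This is the Hadamard-type composition of the polynomial $\sum_k \binom nk t^k=(1+t)^n$ with the sequence $1/((\theta+1)_k(\theta+1)_{n-k})$.

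The plan is to apply the Schur--P\'olya composition theorem, or equivalently Laguerre's theorem on multiplier sequences. First, $\gamma_k=1/(\theta+1)_k$ is a multiplier sequence for $\theta>-1$, because $\sum_k\gamma_k z^k/k!={}_0F_1(;\theta+1;z)$ lies in the Laguerre--P\'olya class with only negative zeros. This is exactly the Bessel product used in the proof of Theorem~\ref{thm:wangyang_intro}. Applying this multiplier to $(1+t)^n$ gives a polynomial $Q(t)=\sum_k\binom nk\gamma_k t^k$ with only real, nonpositive zeros. Next, replace $Q(t)$ by its reversal $t^nQ(1/t)=\sum_k\binom nk\gamma_{n-k}t^k$, which is again real-rooted with nonpositive zeros. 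Applying the multiplier $\gamma_k$ a second time then produces
\[
\sum_k\binom nk\gamma_k\gamma_{n-k}t^k=\frac{P_n^{(\theta)}(t)}{(\theta+1)_n},
\]
so $P_n^{(\theta)}$ is real-rooted. Since all coefficients are strictly positive (the constant term is $1$), no zero lies in $[0,\infty)$. Every zero is therefore strictly negative.

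An alternative, self-contained route comes from the $\gamma$-expansion \eqref{ar:eq:classical}. Write $P_n^{(\theta)}(t)=(1+t)^n g_n\bigl(t/(1+t)^2\bigr)$ with $g_n(s)=\sum_j\frac{n!}{j!(n-2j)!(\theta+1)_j}s^j$. Up to normalization, $g_n$ is a Gegenbauer polynomial written in terms of $x^{-2}$, and its zeros correspond to the real zeros of $C_n^{(\theta)}$ in $(-1,1)$, which is classical orthogonality. A zero $x_0\in(-1,1)$ then corresponds to $s=1/(4x_0^2)>1/4$. The equation $t/(1+t)^2=s$ with $s>1/4$ has non-real solutions in $t$, however, so this route yields zeros on the circle $|t|=1$ rather than negative ones.

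I will therefore reject the alternative route and commit to the multiplier-sequence argument. The main point to check is the direction of the theorem: the Laguerre class membership of ${}_0F_1(;\theta+1;z)$ must hold with nonnegative Taylor coefficients and only real negative zeros. This is precisely the Bessel product already recorded in the paper, valid for $\theta>-1$ and hence for all $\theta\ge0$.
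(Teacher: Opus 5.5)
Your multiplier-sequence argument is correct, but it is not the paper's route. The paper applies Pfaff's transformation to get $P_n^{(\theta)}(t)=\frac{n!}{(\theta+1)_n}(1-t)^n\mathrm P_n^{(\theta,\theta)}\bigl(\frac{1+t}{1-t}\bigr)$. It then reads off the zeros $t=(x-1)/(x+1)<0$ from the zeros $x\in(-1,1)$ of the Jacobi polynomial. You instead use the sufficiency half of the P\'olya--Schur theorem. Because ${}_0F_1(;\theta+1;z)=\prod_j(1+4z/j_{\theta,j}^2)$ is in the Laguerre--P\'olya class with negative zeros, $\{1/(\theta+1)_k\}$ is a multiplier sequence. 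Applying it to $(1+t)^n$, reversing, and applying it again yields $P_n^{(\theta)}(t)/(\theta+1)_n$, and positive coefficients then put every zero in $(-\infty,0)$.

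The paper's route locates the zeros explicitly as M\"obius images of Jacobi zeros, so the pairing $t\leftrightarrow 1/t$ and orthogonal-polynomial interlacing come for free. Yours needs no transformation formula, reuses the Bessel product already invoked for Theorem~\ref{thm:wangyang_intro}, and holds for all $\theta>-1$. It also applies verbatim to any kernel $\binom nk\gamma_k\gamma_{n-k}$ with $\{\gamma_k\}$ a multiplier sequence. Since termwise products of multiplier sequences are again multiplier sequences, it gives, for instance, real-rootedness of the $d$-Hoggatt rows and of the rows of $A_{\boldsymbol\alpha}$ whenever some $\alpha_j=1$. No Jacobi representation is available in those cases.

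You should delete, not merely reject, the alternative route. Its conclusion that zeros lie on $|t|=1$ would contradict the corollary itself, and it comes from a wrong identification of $g_n$. The even moments of $\nu_\theta$ give $g_n(s)=\int_{-1}^1(1+2\sqrt s\,x)^n\,d\nu_\theta(x)$. The Gegenbauer weight $(1-x^2)^{\lambda-1}$ in Laplace's integral equals $\nu_\theta$ when $\lambda=\theta+\frac12$, so that integral gives $g_n(s)=y^{-n}C_n^{(\theta+1/2)}(y)/C_n^{(\theta+1/2)}(1)$ with $4s=1-y^{-2}$. The parameter is therefore $\theta+\frac12$, not $\theta$. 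A zero $y_0\in(-1,1)\setminus\{0\}$ yields $s=(1-y_0^{-2})/4<0$, not $1/(4y_0^2)$. Then $t/(1+t)^2=s<0$ has two negative roots $t$ and $1/t$. Eliminating $s$ gives $y=(1+t)/(1-t)$, and $\mathrm P_n^{(\theta,\theta)}$ is a constant multiple of $C_n^{(\theta+1/2)}$. So the route you discarded is exactly the paper's proof.
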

\begin{proof}
Pfaff's transformation and the hypergeometric representation of the
Jacobi polynomials give
\begin{equation}\label{eq:theta_jacobi}
P_n^{(\theta)}(t)=\frac{n!}{(\theta+1)_n}(1-t)^n
\mathrm P_n^{(\theta,\theta)}\!\left(\frac{1+t}{1-t}\right).
\end{equation}
The zeros of $\mathrm P_n^{(\theta,\theta)}$ lie in $(-1,1)$,
and $t=(x-1)/(x+1)<0$ for such $x$.
\end{proof}
For integral $\theta$, Mao and Wang~\cite{MaoWang2026} proved the
stronger preservation of real-rootedness by the associated basis
transformation.
\begin{proof}[Proof of Theorem~\ref{thm:theta_structure_intro}]
Theorem~\ref{ar:cor:classical} establishes Hankel total positivity,
\eqref{ar:eq:classical} gives the $\gamma$-expansion, and
\eqref{eq:classical_affine_intro} supplies the Stieltjes representation.
\end{proof}
\begin{corollary}\label{prop:interpolation_C1}
The sequence $\{P_n^{(\theta)}(t)\}_{n\ge0}$ is strongly
$t$-log-convex for all $\theta\ge0$.
\end{corollary}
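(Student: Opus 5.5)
Strong $t$-log-convexity of $\{P_n^{(\theta)}(t)\}_{n\ge0}$ means that for all $n\ge m\ge1$,
\[
 P_{n+1}^{(\theta)}(t)P_{m-1}^{(\theta)}(t)-P_n^{(\theta)}(t)P_m^{(\theta)}(t)\coef0 .
\]
The plan is to read this off from the coefficientwise Hankel total positivity already established in Theorem~\ref{ar:cor:classical} (with $r=0$), or more directly from the production-matrix factorization in Proposition~\ref{ar:prop:transfer}. The expression above is exactly a $2\times2$ minor of the Hankel matrix $H=(P_{i+j}^{(\theta)}(t))_{i,j\ge0}$.

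To identify it, take rows $m-1$ and $n$ and columns $0$ and $1$ of $H$:
\[
 \det\begin{pmatrix} P_{m-1} & P_m\\ P_n & P_{n+1}\end{pmatrix}
 =P_{m-1}P_{n+1}-P_mP_n .
\]
Since $m-1<n$, the row indices are increasing, and the column indices $0<1$ are increasing. This is therefore a genuine minor of $H$, and coefficientwise Hankel total positivity gives $\coef0$ immediately. The case $m=n$ is ordinary $t$-log-convexity, and the case $m<n$ is the strong version; both are covered by the same choice of rows.

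The only real point to check is the convention. Some authors define strong $t$-log-convexity by requiring $P_{n+1}P_{m-1}-P_nP_m\coef0$ for all $n\ge m\ge1$, and others use $P_{i}P_{j}-P_{i-1}P_{j+1}$ for $i\le j$. These are the same family after reindexing, so one should confirm that the index pattern matches a minor with rows $\{m-1,n\}$ and columns $\{0,1\}$, as above. With that confirmed, the corollary follows. The argument holds uniformly for all $\theta\ge0$, including the $\theta=0$ limit case, which Theorem~\ref{ar:cor:classical} already covers.
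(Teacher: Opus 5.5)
Your proposal is correct and is essentially the paper's proof. The paper also observes that the defining differences $P_{m-1}P_{n+1}-P_mP_n$ are $2\times2$ minors of the Hankel matrix and then applies Theorem~\ref{ar:cor:classical}; your choice of rows $\{m-1,n\}$ and columns $\{0,1\}$ is just the transpose of the choice the paper makes in Corollary~\ref{cor:super-Narayana-HTP}. (The nonnegativity of the coefficients of each $P_n^{(\theta)}$, also required by the definition, follows from the $1\times1$ minors or directly from \eqref{ar:eq:classicalcoeff}.)
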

\begin{proof}
The defining differences are order-two Hankel minors; apply
Theorem~\ref{ar:cor:classical}.
\end{proof}

\subsection{Super Narayana polynomials and the Rodelet--Causse--Tevlin conjecture}
\label{subsec:super-Narayana-classical}
For integers $s,r\ge0$, write the classical super Narayana
polynomial in the normalization of
Rodelet--Causse and Tevlin~\cite[Definition~2]{RodeletTevlin2025} as
\begin{equation}\label{eq:classical-super-Narayana}
 B_r^{(s)}(t):=\mathcal N_{s+r}^{(s)}(1,t)
 =\frac{\binom{2s}s}{\binom{s+r}s}
 \sum_{k=0}^{r}\binom{s+r}k\binom{s+r}{k+s}t^k.
\end{equation}
Their Conjecture~3 in \cite{RodeletTevlin2025} asks for coefficientwise
$t$-log-convexity of this sequence.

\begin{corollary}[Hankel total positivity of super Narayana polynomials]
\label{cor:super-Narayana-HTP}
For each integer $s\ge0$, the sequence
$\big(B_r^{(s)}(t)\big)_{r\ge0}$ is coefficientwise
Hankel-totally positive. More precisely, if $\ell\ge1$ and
$0\le i_1<\cdots<i_\ell$, $0\le j_1<\cdots<j_\ell$, then
\begin{equation}\label{eq:super-Narayana-Hankel}
 \det\!\left[B_{i_a+j_b}^{(s)}(t)\right]_{a,b=1}^{\ell}
 \in\mathbb R_{\ge0}[t].
\end{equation}
In particular, it is strongly coefficientwise $t$-log-convex,
confirming Conjecture~3 of Rodelet--Causse--Tevlin.
\end{corollary}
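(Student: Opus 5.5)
The proof reduces the corollary to Theorem~\ref{ar:cor:classical} at $\theta=s$ and $r=0$. Everything rests on the identity
\[
 B_r^{(s)}(t)=\binom{2s}{s}P_r^{(s)}(t)\qquad(r\ge0),
\]
which the introduction asserts. Once it holds, every Hankel minor $\det[B^{(s)}_{i_a+j_b}(t)]_{a,b=1}^{\ell}$ equals $\binom{2s}{s}^{\ell}\det[P^{(s)}_{i_a+j_b}(t)]_{a,b=1}^{\ell}$. The positive constant $\binom{2s}{s}^{\ell}$ does not affect coefficientwise nonnegativity, so \eqref{eq:super-Narayana-Hankel} follows from the theorem. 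Strong coefficientwise $t$-log-convexity is the special case of the order-two minors $B_{i}B_{j+1}-B_{i+1}B_{j}$ with $i\le j$, as in Corollary~\ref{prop:interpolation_C1}.

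The only real work is checking the coefficient identity termwise from \eqref{ar:eq:classicalcoeff} at $\theta=s$, an integer. Write $(s+1)_m=(s+m)!/s!$. The coefficient of $t^k$ in $P_r^{(s)}$ becomes
\[
 \binom rk\frac{(s+r)!\,s!}{(s+k)!\,(s+r-k)!}.
\]
On the other side, expand
\[
 \frac{\binom{2s}{s}}{\binom{s+r}{s}}\binom{s+r}{k}\binom{s+r}{k+s}
 =\frac{(2s)!\,r!}{s!\,(s+r)!}\cdot\frac{(s+r)!}{k!\,(s+r-k)!}\cdot\frac{(s+r)!}{(k+s)!\,(r-k)!}.
\]
Divide this by $\binom{2s}{s}=(2s)!/(s!)^2$. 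The result is
\[
 \frac{r!}{k!\,(r-k)!}\cdot\frac{s!\,(s+r)!}{(s+r-k)!\,(s+k)!},
\]
which is exactly the coefficient above. Both sums run over $0\le k\le r$, so the identity holds.

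I expect no real obstacle. The only care needed is matching the normalization of \cite[Definition~2]{RodeletTevlin2025} as transcribed in \eqref{eq:classical-super-Narayana}, including the index range and the placement of $s$ in the second binomial. The degenerate case $s=0$ gives $B^{(0)}_r=\sum_k\binom rk^2t^k=P^{(0)}_r$, the type~$B$ endpoint, which is also covered by Theorem~\ref{ar:cor:classical} through the $\theta\downarrow0$ limit.
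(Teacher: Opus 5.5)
Your proposal is correct and matches the paper's proof. Both arguments verify $B_r^{(s)}(t)=\binom{2s}{s}P_r^{(s)}(t)$ by cancelling factorials, factor $\binom{2s}{s}^{\ell}$ out of each Hankel minor, apply Theorem~\ref{ar:cor:classical} at $\theta=s$ (including the $\theta=0$ limit for $s=0$), and obtain strong $t$-log-convexity from the order-two minors using rows $0,1$.
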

\begin{proof}
Cancelling ordinary factorials gives
\[
 \frac{\binom{s+r}k\binom{s+r}{k+s}}{\binom{s+r}s}
 =\binom rk\frac{(s+1)_r}{(s+1)_k(s+1)_{r-k}}.
\]
Comparison with \eqref{ar:eq:classicalcoeff} therefore yields
\begin{equation}\label{eq:super-Narayana-classical-connection}
 B_r^{(s)}(t)=\binom{2s}s P_r^{(s)}(t),
 \qquad B_0^{(s)}(t)=\binom{2s}s.
\end{equation}
Every minor in \eqref{eq:super-Narayana-Hankel} is consequently
$\binom{2s}s^{\ell}$ times the corresponding minor of the Hankel
matrix of $\big(P_r^{(s)}(t)\big)_{r\ge0}$.
Theorem~\ref{ar:cor:classical} proves its coefficientwise
nonnegativity. Strong $t$-log-convexity follows from the
order-two minors; for example, rows $0,1$ and columns $a-1,b$
give $B_{a-1}^{(s)}B_{b+1}^{(s)}-B_a^{(s)}B_b^{(s)}$
for $1\le a\le b$. Taking $a=b$ gives the conjectured
coefficientwise $t$-log-convexity.
\end{proof}

\section{Extreme coefficients of arbitrary Hankel minors}\label{ar:sec:extreme}
The production-matrix factorization gives more than nonnegativity.
Its first nonzero term can be calculated for every choice of indices.

\begin{theorem}[Universal factor and extreme coefficients]\label{ar:thm:extreme}
Let $r_h>0$ and assume that the matrix $J_r(t)$ in \eqref{ar:eq:Jb}
is coefficientwise totally positive. Write $A_n(t)=(J_r(t)^n)_{00}$.
For increasing nonnegative index sets
$I=(i_1<\cdots<i_m)$ and $J=(j_1<\cdots<j_m)$, where $m\ge1$, put
\[
 D_{I,J}(t)=\det[A_{i_a+j_b}(t)]_{a,b=1}^m,\qquad
 N=\sum_{a=1}^m i_a+\sum_{b=1}^m j_b,\qquad c=\binom m2.
\]
Then
\begin{equation}\label{ar:eq:minor-factor}
 D_{I,J}(t)=t^c F_{I,J}(t),\qquad F_{I,J}(t)\in\R_{\ge0}[t],
 \qquad F_{I,J}(t)=t^{N-2c}F_{I,J}(1/t).
\end{equation}
Moreover,
\begin{equation}\label{ar:eq:extreme-coefficient}
 F_{I,J}(0)=[t^{N-2c}]F_{I,J}(t)
 =\left(\prod_{h=1}^{m-1}r_h^{m-h}\right)
 \frac{\displaystyle\prod_{a<b}(i_b-i_a)(j_b-j_a)}
 {\displaystyle\prod_{h=0}^{m-1}(h!)^2}>0.
\end{equation}
Thus the order of vanishing of $D_{I,J}$ at $t=0$ is exactly $c$,
and its degree is exactly $N-c$. The result applies, in particular, to the Gegenbauer weights
$r_h=\rho_h(\theta)$ and to their associated tails.
\end{theorem}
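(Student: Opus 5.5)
Throughout, write $J=J_r(t)$. The plan is to use the factorization \eqref{ar:eq:factorH},
\[
 H=O\diag(d_0,d_1,\ldots)O^{\mathsf T},\qquad d_k=t^k\prod_{h\le k}r_h ,
\]
together with Cauchy--Binet:
\[
 D_{I,J}=\sum_{K}\det O_{I,K}\,\Bigl(\prod_{k\in K}d_k\Bigr)\det O_{J,K},
\]
where $K=(k_1<\cdots<k_m)$ ranges over $m$-subsets of $\{0,1,\ldots\}$. By Proposition~\ref{ar:prop:transfer}, $O$ is coefficientwise totally positive, so every term is in $\R_{\ge0}[t]$.

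\textbf{Lowest order.} Each term carries at least the factor $t^{\sum k_a}\ge t^{0+1+\cdots+(m-1)}=t^{c}$, with equality only for $K=\{0,\ldots,m-1\}$. This gives the factor $t^c$ and the nonnegativity of $F_{I,J}$. We still need $t^c$ to be the exact order, with the constant term \eqref{ar:eq:extreme-coefficient}. For that we evaluate only the $K_0=\{0,\ldots,m-1\}$ term at $t=0$. Its diagonal factor contributes
\[
 \prod_{k<m}\prod_{h\le k}r_h=\prod_{h=1}^{m-1}r_h^{m-h}.
\]
At $t=0$ the matrix $J$ becomes $I+S$, where $S$ is the superdiagonal shift. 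Hence $O_{nk}(0)=\binom nk$, and
\[
 \det\Bigl[\tbinom{i_a}{k}\Bigr]_{a\le m,\,k<m}=\frac{\prod_{a<b}(i_b-i_a)}{\prod_{h<m}h!},
\]
because the columns $\binom xk$ span the polynomials of degree $<m$ with leading coefficients $1/k!$ (Vandermonde). The same holds for $J$. Multiplying gives $F_{I,J}(0)$, and it is $>0$ since the $r_h>0$ and the index sets are strictly increasing.

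\textbf{Palindromicity.} I would derive it from a symmetry of $A_n$ rather than of the factorization. Let $D=\diag(1,t,t^2,\ldots)$. Then
\[
 t^{-1}D^{-1}J(t)D\quad\text{has diagonal } (1+t)/t=1+1/t,\ \text{superdiagonal } 1,\ \text{subdiagonal } r_h/t=(1/t)\,r_h .
\]
That is, $t^{-1}D^{-1}J(t)D=J(1/t)$. Since $D$ has $(0,0)$ entry $1$, this gives $A_n(t)=t^nA_n(1/t)$. The minor is homogeneous: row $a$, column $b$ scales by $t^{i_a+j_b}$, so
\[
 D_{I,J}(t)=t^{N}D_{I,J}(1/t).
\]
Substituting $D=t^cF$ yields $t^cF(t)=t^{N}t^{-c}F(1/t)$, that is, $F(t)=t^{N-2c}F(1/t)$. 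Thus the top coefficient of $F$ equals $F(0)\neq 0$, so the degree of $D_{I,J}$ is exactly $N-c$.

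\textbf{Main obstacle.} The point needing care is that, at the lowest order, only $K_0$ contributes. Other $K$ have $\sum k_a>c$ and so cannot reach $t^c$, even though $\det O_{I,K}$ may have a constant term; since $O$ involves only nonnegative powers of $t$, the bound on the power of $t$ is clean. The Vandermonde evaluation of the binomial determinant is then routine. Finally, for the Gegenbauer application, the weights $\rho_h(\theta)>0$ (and their limits at $\theta=0$) and the tails give coefficientwise totally positive $J$ by the proof of Theorem~\ref{ar:cor:classical}.
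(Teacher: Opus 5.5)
Your proof is correct and follows the paper's argument. You use Cauchy--Binet on $H=O\,\diag(d_k)\,O^{\mathsf T}$ for the factor $t^c$ and nonnegativity, then the single term $K_0=\{0,\ldots,m-1\}$ with $O_{nk}(0)=\binom nk$ and the binomial Vandermonde determinant for the constant term, and finally $A_n(t)=t^nA_n(1/t)$ with row/column scaling for palindromicity and the exact degree. The only variation is that you derive $A_n(t)=t^nA_n(1/t)$ from the diagonal similarity $t^{-1}D^{-1}J_r(t)D=J_r(1/t)$, whereas the paper reads it off from the Motzkin-path weights $t^j(1+t)^{n-2j}$; this is the same fact, stated in matrix rather than path language.
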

\begin{proof}
Set $O_{nk}=(J_r(t)^n)_{0k}$ and $d_k=t^k\prod_{h=1}^k r_h$.
By Proposition~\ref{ar:prop:transfer} and \eqref{ar:eq:factorH},
Cauchy--Binet gives the finite expansion
\begin{equation}\label{ar:eq:minor-CB}
 D_{I,J}(t)=
 \sum_{0\le k_1<\cdots<k_m}
 \left(\prod_{a=1}^m d_{k_a}\right)
 \det O_{I,K}(t)\det O_{J,K}(t).
\end{equation}
Here $K=(k_1,\ldots,k_m)$ and terms with
$k_m>\min(i_m,j_m)$ vanish. Each term has nonnegative coefficients,
and $\sum_a k_a\ge c$, proving divisibility and nonnegativity of
the quotient.

At $t=0$, the production matrix has diagonal and superdiagonal
entries equal to $1$ and all other entries zero. Hence
$O_{nk}(0)=\binom nk$. Only $K=(0,1,\ldots,m-1)$ contributes to
$[t^c]D_{I,J}$. The elementary determinant identity
\[
 \det\left[\binom{i_a}{b-1}\right]_{a,b=1}^m
 =\frac{\prod_{a<b}(i_b-i_a)}{\prod_{h=0}^{m-1}h!}
\]
follows by expressing the binomial polynomials in the monomial
basis: their degrees are $b-1$ and their leading coefficients are
$1/(b-1)!$. Applying it to $I$ and $J$ proves the constant term in
\eqref{ar:eq:extreme-coefficient}, which is strictly positive.

Finally a length-$n$ Motzkin excursion with $j$ down steps contributes
a constant multiple of $t^j(1+t)^{n-2j}$. Consequently
$A_n(t)=t^n A_n(1/t)$. Factoring $t^{i_a}$ from each row and
$t^{j_b}$ from each column gives $D_{I,J}(t)=t^N D_{I,J}(1/t)$.
After division by $t^c$ this is \eqref{ar:eq:minor-factor}.
The nonzero constant term then determines the exact degree and the
equal highest-degree coefficient.
\end{proof}

\begin{remark}[Palindromicity does not imply gamma positivity]
\label{ar:rem:minors-not-gamma}
Even for the classical Narayana endpoints the quotients $F_{I,J}$
need not be gamma-positive. For $I=(0,1)$ and $J=(1,2)$,
\[
 D_{I,J}(t)=A_1A_3-A_2^2
 =tr_1\bigl((1+t)^2-tr_1\bigr).
\]
For type A ($r_1=1$), its quotient by $t$ is
$1+t+t^2=(1+t)^2-t$, with gamma coefficient $-1$.
For type B ($r_1=2$), the quotient is
$2(1+t^2)=2(1+t)^2-4t$, with gamma coefficient $-4$.
Thus Theorem~\ref{ar:thm:extreme} supplies palindromic polynomials with
nonnegative ordinary coefficients, but not gamma positivity of all
normalized minors.
\end{remark}

\subsection{Connection with Han's convolution powers}
Han~\cite{Han2026} determined Hankel determinants for convolution powers
of the classical Narayana generating function.  Put
\[
 G_\theta(t,z)=\sum_{n\ge0}P_n^{(\theta)}(t)z^n,
 \qquad
 P_n^{(\theta)}(t)=\sum_{k=0}^n
 \binom nk\frac{\binom{n+\theta}{k}}{\binom{k+\theta}{k}}t^k.
\]
If $C(t,z)$ denotes the generating function for the ordinary type~$A$
Narayana polynomials, with constant coefficient $1$, then
\[
 G_1(t,z)=\frac{C(t,z)-1}{z}.
\]
Thus Han's even-indexed convolution powers arise at $\theta=1$ in the family
\[
 H_N^{(\theta,m)}(t)=
 \det\bigl([z^{i+j}]G_\theta(t,z)^m\bigr)_{0\le i,j<N}.
\]
The coefficientwise Hankel-total-positivity results proved here concern
$m=1$ and do not extend automatically to $m>1$.  For example,
\[
 G_1(t,z)=1+(1+t)z+(1+3t+t^2)z^2+O(z^3),
 \qquad H_2^{(1,2)}(t)=-(1+t^2).
\]
The type~$B$ endpoint $\theta=0$ admits a separate, particularly
simple analysis.  Write $C_n^{(\lambda)}(x)$ for the Gegenbauer
polynomial, characterized by
$\sum_{n\ge0}C_n^{(\lambda)}(x)z^n=(1-2xz+z^2)^{-\lambda}$.

\begin{proposition}[Type~$B$ convolution powers and finite Hankel rank]
\label{prop:Han-typeB-convolution}
For every positive integer $m$,
\begin{equation}\label{eq:Han-typeB-generating}
 G_0(t,z)^m
 =\bigl(1-2(1+t)z+(1-t)^2z^2\bigr)^{-m/2},
\end{equation}
and hence
\begin{equation}\label{eq:Han-typeB-Gegenbauer}
 [z^n]G_0(t,z)^m
 =(1-t)^n C_n^{(m/2)}\!\left(\frac{1+t}{1-t}\right).
\end{equation}
The right-hand side is interpreted as a polynomial in $t$ (including
$t=1$).  If $m=2r$ is even, then
\begin{equation}\label{eq:Han-typeB-rank}
 H_N^{(0,2r)}(t)=0\qquad(N>2r).
\end{equation}
Moreover, for every $m\ge1$,
\begin{equation}\label{eq:Han-typeB-second}
 H_2^{(0,m)}(t)
 =\frac m2\bigl((2-m)(1+t)^2-(1-t)^2\bigr).
\end{equation}
\end{proposition}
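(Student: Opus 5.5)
First I will compute the generating function $G_0(t,z)$. Since $P_n^{(0)}(t)=\sum_k\binom nk^2t^k$, the classical type~$B$ formula gives $G_0(t,z)=\bigl(1-2(1+t)z+(1-t)^2z^2\bigr)^{-1/2}$. To derive this within the paper, I will use the affine moment representation \eqref{eq:classical_affine_intro} at $\theta=0$ with the arcsine law:
\[
 G_0(t,z)=\int_{-1}^1\frac{d\nu_0(x)}{1-z(1+t+2\sqrt t\,x)}.
\]
This equals $\bigl((1-z(1+t))^2-4tz^2\bigr)^{-1/2}$, by the standard Stieltjes transform of the arcsine law, $\int d\nu_0(x)/(a-bx)=(a^2-b^2)^{-1/2}$. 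Expanding the square gives exactly $1-2(1+t)z+(1-t)^2z^2$, and raising to the power $m$ proves \eqref{eq:Han-typeB-generating}.

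For \eqref{eq:Han-typeB-Gegenbauer}, I substitute $w=(1-t)z$ and $x=(1+t)/(1-t)$. Then the quadratic becomes $1-2xw+w^2$, and the Gegenbauer generating function with $\lambda=m/2$ gives the coefficient $(1-t)^nC_n^{(m/2)}(x)$. Because $C_n^{(\lambda)}$ has degree $n$, the product $(1-t)^nC_n^{(\lambda)}\bigl((1+t)/(1-t)\bigr)$ is a polynomial in $t$. Hence the identity, valid for $t\neq1$, extends to $t=1$ by continuity, or equivalently as an identity in $\R[t][[z]]$.

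For the rank statement with $m=2r$, the generating function $G_0^{2r}=Q(z)^{-r}$ is rational, where $Q=1-2(1+t)z+(1-t)^2z^2$ has degree $2$. Writing $Q^{-r}=1/Q^r$ with $\deg Q^r=2r$, the coefficients $c_n$ satisfy a linear recurrence of order $2r$ with constant (in $n$) coefficients, valid for all $n\ge0$ since the numerator is $1$, of degree $0<2r$. Thus $\sum_{i=0}^{2r}q_ic_{n-i}=0$ for $n\ge 1$ (with $c_{-j}=0$), and in particular for $n\ge2r$. In an $N\times N$ Hankel matrix with $N>2r$, the last column is then a combination of the preceding $2r$ columns: the entries $c_{i+N-1}$ with $i\ge0$ have index at least $N-1\ge2r$. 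This uses $q_0=1$. Hence $H_N^{(0,2r)}=0$. I will need care only to check that the indices stay in the range where the recurrence holds, which they do.

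For \eqref{eq:Han-typeB-second}, I compute $c_0=1$, $c_1=m(1+t)$ and $c_2$ from $C_2^{(\lambda)}(x)=2\lambda(\lambda+1)x^2-\lambda$ with $\lambda=m/2$. This gives $c_2=\tfrac m2\bigl((m+2)(1+t)^2/2\cdot\ldots\bigr)$; precisely, $(1-t)^2C_2=\tfrac{m(m+2)}2(1+t)^2-\tfrac m2(1-t)^2$. Then
\[
 c_0c_2-c_1^2=\tfrac m2\bigl((m+2-2m)(1+t)^2-(1-t)^2\bigr),
\]
which is the claimed formula. There is no real obstacle here. The only delicate point is the first step's arcsine integral (or, alternatively, quoting the classical Legendre-type generating function for $\sum\binom nk^2t^k$), and I expect that to be the main thing to justify cleanly.
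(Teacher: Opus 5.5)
Your proposal is correct and follows essentially the same route as the paper: the closed form for $G_0$, the Gegenbauer generating function after $z\mapsto(1-t)z$, the constant-coefficient recurrence coming from the degree-$2r$ denominator when $m=2r$, and a direct $2\times2$ computation from the coefficients $1$, $m(1+t)$ and $\tfrac{m(m+2)}2(1+t)^2-\tfrac m2(1-t)^2$. The only difference is that you derive $G_0(t,z)=\bigl(1-2(1+t)z+(1-t)^2z^2\bigr)^{-1/2}$ from the arcsine moment representation, where the paper simply quotes the Legendre generating function; this is a harmless, self-contained variant that is valid as an identity of formal power series.
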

\begin{proof}
At $\theta=0$, the row polynomials are
$P_n^{(0)}(t)=\sum_{k=0}^n\binom nk^2t^k$.  Their standard
Legendre-polynomial generating function gives
\[
 G_0(t,z)=\bigl(1-2(1+t)z+(1-t)^2z^2\bigr)^{-1/2}.
\]
Raising this identity to the $m$th power gives
\eqref{eq:Han-typeB-generating}; the Gegenbauer generating function,
with $z$ replaced by $(1-t)z$, gives
\eqref{eq:Han-typeB-Gegenbauer}.  For $m=2r$,
$G_0(t,z)^{2r}$ is rational with denominator of degree $2r$,
and its coefficients satisfy a constant-coefficient recurrence
of order at most $2r$.  Every Hankel matrix of size $N>2r$
is therefore singular.  Finally, writing $a=1+t$ and
$b=(1-t)^2$, the coefficients of orders $0,1,2$ in
$(1-2az+bz^2)^{-m/2}$ are
\[
 1,\qquad ma,\qquad \frac{m(m+2)}2a^2-\frac m2b.
\]
Their $2\times2$ Hankel determinant is
$\frac m2((2-m)a^2-b)$, which proves
\eqref{eq:Han-typeB-second}.
\end{proof}

In particular,
\[
 H_2^{(0,2)}(t)=-(1-t)^2,
 \qquad
 H_2^{(0,3)}(t)=-3(1+t^2),
\]
whereas $H_2^{(1,2)}(t)=-(1+t^2)$ as above.
For $m=1$, the arcsine Jacobi coefficients also give
\[
 H_N^{(0,1)}(t)=2^{N-1}t^{\binom N2}\qquad(N\ge1).
\]
For odd convolution powers $m\ge3$, it would be interesting to
seek explicit Hankel determinants analogous to Han's type~$A$
evaluations; the rational finite-rank argument no longer applies.
More generally, it remains open to what extent Han's formulas
admit analogues throughout the continuous parameter $\theta$.

\section{Preliminaries and the Zhu--Sun criterion}\label{sec:prelim} 

For polynomials \(f(t),g(t)\in\R[t]\), we write
\[
f(t)\qge g(t)
\]
if \(f(t)-g(t)\) has nonnegative coefficients; this is the same order
as $\coef$ used above.

\begin{definition}
A sequence \(\{f_n(t)\}_{n\geq0}\) of polynomials with nonnegative
coefficients is \emph{strongly \(t\)-log-convex} if
\begin{equation}
f_{n-1}(t)f_{m+1}(t)-f_n(t)f_m(t)\qge0
\qquad(m\geq n\geq1).
\label{eq:strong}
\end{equation}
\end{definition}

Let \(a(n,k)\) be a triangle of nonnegative real numbers, extended by zero
for $k<0$ or $k>n$, and set
\[
A_n(t)=\sum_{k=0}^n a(n,k)t^k.
\]
For \(m\geq n>0\), \(0\leq h\leq m+n\), and
\(0\leq k\leq\lfloor h/2\rfloor\), Zhu and Sun define
\begin{equation}
\alpha_k(m,n,h)=
\begin{cases}
a(n-1,k)a(m+1,h-k)
+a(m+1,k)a(n-1,h-k)
\\ \qquad
-a(n,k)a(m,h-k)
-a(m,k)a(n,h-k),
& k<h/2,\\[2mm]
a(n-1,k)a(m+1,k)-a(n,k)a(m,k),
& k=h/2.
\end{cases}
\label{eq:alpha}
\end{equation}

Zhu and Sun~\cite[Theorem~2.1]{ZhuSun2015} assume an index \(r\)
such that \(\alpha_k\geq0\) for \(k\leq r\) and \(\alpha_k<0\)
for \(k>r\).  For the present application, mixed Hoggatt minors
can have trailing zeros outside the support of the smaller row.
We therefore use a weak-sign variant with \(\alpha_k\leq0\)
after the change, and include its short proof for clarity.

\begin{lemma}[Weak-sign Zhu--Sun criterion]
\label{lem:ZSweak}
Suppose that
\begin{enumerate}[label=\textup{(C\arabic*)}]
\item \(\{A_n(t)\}_{n\geq0}\) is strongly \(t\)-log-convex;
\item for every \(m\geq n>0\) and \(0\leq h\leq m+n\), there is an index
\(r=r(m,n,h)\) with $0\le r\le\lfloor h/2\rfloor$ such that
\[
\alpha_k(m,n,h)\geq0\quad(0\leq k\leq r),
\qquad
\alpha_k(m,n,h)\leq0\quad(r<k\leq\lfloor h/2\rfloor).
\]
\end{enumerate}
Then the transformation
\[
B_n(t)=\sum_{k=0}^n a(n,k)f_k(t)
\]
preserves strong \(t\)-log-convexity for input sequences with
nonnegative polynomial coefficients.
\end{lemma}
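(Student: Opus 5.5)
We reproduce the Zhu--Sun argument in its weak-sign form. Fix $m\ge n\ge1$ and set $B_n(t)=\sum_k a(n,k)f_k(t)$. Expand
\[
B_{n-1}B_{m+1}-B_nB_m=\sum_{h\ge0}\sum_{i+j=h}\bigl(a(n-1,i)a(m+1,j)-a(n,i)a(m,j)\bigr)f_if_j .
\]
Group the pairs $(i,j)$ and $(j,i)$ with $i+j=h$ and $i<j$, and treat the diagonal term $i=j=h/2$ separately. This gives
\[
B_{n-1}B_{m+1}-B_nB_m=\sum_{h}\sum_{k=0}^{\lfloor h/2\rfloor}\alpha_k(m,n,h)\,f_kf_{h-k},
\]
with $\alpha_k$ as in \eqref{eq:alpha}. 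The proof then shows that, for each fixed $h$, the inner sum is coefficientwise nonnegative.

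The first ingredient is that strong $t$-log-convexity of $(f_k)$ with nonnegative coefficients gives the chain
\[
f_0f_h\qge f_1f_{h-1}\qge\cdots\qge f_{\lfloor h/2\rfloor}f_{h-\lfloor h/2\rfloor}.
\]
Indeed, for $k<h/2$ we have $f_kf_{h-k}-f_{k+1}f_{h-k-1}\qge0$ by \eqref{eq:strong} with $n=k+1$ and $m=h-k-1\ge k+1$. (When $k=0$ this needs $f_0$ included; the definition covers $n\ge1$, so $n-1=0$ is allowed.)

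Write $g_k=f_kf_{h-k}$, which is decreasing in $\qge$. Abel summation gives
\[
\sum_k\alpha_kg_k=\sum_{k}\Bigl(\sum_{l\le k}\alpha_l\Bigr)(g_k-g_{k+1})+\Bigl(\sum_{l\le\lfloor h/2\rfloor}\alpha_l\Bigr)g_{\lfloor h/2\rfloor}.
\]
By (C2), the partial sums $S_k=\sum_{l\le k}\alpha_l$ increase up to $k=r$ and then decrease. Hence they are minimized at an endpoint. At $k=0$ we have $S_0=\alpha_0\ge0$, since $r\ge0$ guarantees $\alpha_0\ge0$. The other endpoint is the total sum $S=\sum_{k\le\lfloor h/2\rfloor}\alpha_k(m,n,h)$.

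The key step is to identify $S$ with a coefficient from (C1). It equals $[t^h]\bigl(A_{n-1}A_{m+1}-A_nA_m\bigr)$, by exactly the same grouping of pairs applied to $f_k=t^k$. By (C1) this is $\ge0$. Therefore every $S_k\ge0$, each $g_k-g_{k+1}\qge0$, and $g_{\lfloor h/2\rfloor}\qge0$, so the inner sum is $\qge0$.

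Summing over $h$ proves \eqref{eq:strong} for $(B_n)$. The nonnegativity of the coefficients of $B_n$ is clear from $a(n,k)\ge0$.

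The main point needing care is the bookkeeping at the diagonal $k=h/2$ and at the boundary terms where $k$ or $h-k$ falls outside $[0,n-1]$ or $[0,m+1]$. These are handled by the zero extension of $a(n,k)$, which is exactly why only the weak sign condition $\alpha_k\le0$ after $r$ is needed.
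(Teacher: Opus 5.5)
Your proof is correct and is essentially the paper's argument. It uses the same chain $f_0f_h\qge f_1f_{h-1}\qge\cdots$, the same single sign change, and the same identification of $\sum_k\alpha_k$ with $[t^h](A_{n-1}A_{m+1}-A_nA_m)\ge0$ via (C1); your Abel summation over the nonnegative partial sums $S_k$ is just a telescoped rewriting of the paper's step $\sum_k\alpha_k(p_k-p_r)\qge0$ followed by adding $p_r\sum_k\alpha_k\qge0$. (Minor slip: the chain step is only needed for $k<\lfloor h/2\rfloor$, and your choice $m=h-k-1\ge k+1$ only holds there; at $k=(h-1)/2$ for odd $h$ the difference is identically zero anyway.)
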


\begin{proof}
This is the proof of Zhu and Sun with non-strict inequalities.  For fixed
\(m\geq n>0\) and \(h\), put
\[
p_k(t)=f_k(t)f_{h-k}(t),
\qquad
A(m,n,h)=\sum_{k=0}^{\lfloor h/2\rfloor}\alpha_k(m,n,h).
\]
Strong \(t\)-log-convexity of \(\{f_k(t)\}\) gives
\[
p_0(t)\qge p_1(t)\qge\cdots\qge
p_{\lfloor h/2\rfloor}(t).
\]
For $k\le r$, both $\alpha_k$ and $p_k-p_r$ are coefficientwise
nonnegative. For $k>r$, both $-\alpha_k$ and $p_r-p_k$ are
coefficientwise nonnegative. Therefore
\[
 \sum_k\alpha_k(m,n,h)p_k(t)-p_r(t)\sum_k\alpha_k(m,n,h)
 =\sum_k\alpha_k(m,n,h)(p_k(t)-p_r(t))\qge0.
\]
Condition \textup{(C1)} says precisely that
\(A(m,n,h)\geq0\), since it is the coefficient of \(t^h\) in
\(A_{n-1}(t)A_{m+1}(t)-A_n(t)A_m(t)\).  Hence the last expression has
nonnegative coefficients.  Summing over \(h\) proves
\[
B_{n-1}(t)B_{m+1}(t)-B_n(t)B_m(t)\qge0.
\]
\end{proof}

For the Hoggatt triangle
\[
a(n,k)=\Hog{n}{k}{d},
\]
condition \textup{(C1)} follows from the strong \(t\)-log-convexity
theorem of Chern and Shi~\cite[Theorem~1.2]{ChernShi2026}.  We therefore concentrate
on condition \textup{(C2)}.

We use the rising-factorial notation
\[
R_d(x):=(x)_d=x(x+1)\cdots(x+d-1).
\]
When \(d\) is fixed, we abbreviate \(R_d(x)\) to \(R(x)\).
A direct calculation from \eqref{eq:hoggatt} gives
\begin{equation}
\frac{a(N+1,j)}{a(N,j)}
=
\frac{R(N+1)}{R(N-j+1)},
\qquad
\frac{a(N-1,j)}{a(N,j)}
=
\frac{R(N-j)}{R(N)}.
\label{eq:hoggatt-ratios}
\end{equation}

\section{Hoggatt transformations}\label{sec:hoggattproof}
\subsection{A hypergeometric hitting inequality}\label{sec:hypergeom_ineq}

For integers $x\ge0$ and $r\ge1$, define
\begin{equation}\label{eq:def_Fd}
F_{d}(x, r)=1-\frac{R_{d}(x)}{R_{d}(x+r)}.
\end{equation}
Set $F_d(x,0)=0$ separately for every $x\ge0$; this convention
also defines $F_d(0,0)$ without a quotient $0/0$.

The following local inequality is our main technical building block.
It has a direct combinatorial interpretation in terms of hitting probabilities for hypergeometric sampling.

\begin{lemma}[Hypergeometric hitting inequality]\label{lem:hypergeom_hit}
For integers $d\ge 1$, $x\ge 0$, and $r\ge 2$,
\begin{equation}\label{eq:local}
F_{d}(x+1, r)\,F_{d}(x+1, r-2)
\ge F_{d}(x, r)\,F_{d}(x+2, r-2).
\end{equation}
\end{lemma}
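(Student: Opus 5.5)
Throughout, $R=R_d$ and $F=F_d$ with $d$ fixed. For $r=2$ the right-hand side vanishes, because $F(x+2,0)=0$ by convention. The left-hand side is $0$ as well, since $F(x+1,0)=0$, so equality holds. We may therefore assume $r\ge3$, where all four quantities are positive. The plan is to clear denominators and split the inequality into a ``denominator'' part and a ``numerator'' part. Each part should be handled by a log-concavity property of $R$ or of its finite differences.

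\emph{Step 1 (reformulation).} Write
\[
F(x,r)=\frac{\Delta_r(x)}{R(x+r)},\qquad \Delta_r(x):=R(x+r)-R(x).
\]
The inequality \eqref{eq:local} becomes
\[
\frac{\Delta_r(x+1)\,\Delta_{r-2}(x+1)}{R(x+r+1)\,R(x+r-1)}
\;\ge\;
\frac{\Delta_r(x)\,\Delta_{r-2}(x+2)}{R(x+r)^2}.
\]
As a sanity check, for $d=1$ we have $F(x,r)=r/(x+r)$. The claim then reads $(x+r)^2\ge(x+r+1)(x+r-1)$, which is trivially true. This already shows that the denominators are the favourable part.

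\emph{Step 2 (denominators).} The ratio
\[
\frac{R(y+1)}{R(y)}=\frac{y+d}{y}
\]
is decreasing in $y$, so $R$ is log-concave on the positive integers. Hence $R(x+r)^2\ge R(x+r+1)R(x+r-1)$. It therefore suffices to prove the numerator inequality
\[
\Delta_r(x+1)\,\Delta_{r-2}(x+1)\;\ge\;\Delta_r(x)\,\Delta_{r-2}(x+2).
\]
If this turns out to be false in some range, I would fall back on keeping part of the slack from Step 2. That slack equals
\[
\frac{R(x+r)^2}{R(x+r+1)R(x+r-1)}=\frac{(x+r)(x+r+d-1)}{(x+r-1)(x+r+d)}\ge1.
\]
In that case I would compare its logarithm with the numerator defect.

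\emph{Step 3 (numerators, the main obstacle).} Write $\delta(y)=R(y+1)-R(y)=d\,R_{d-1}(y+1)$, so that
\[
\Delta_s(y)=\sum_{i=0}^{s-1}\delta(y+i).
\]
The numerator inequality then compares two products of window sums of the increasing sequence $\delta$:
\begin{itemize}
\item the windows $[x+1,x+r]$ and $[x+1,x+r-2]$ on the left;
\item the windows $[x,x+r-1]$ and $[x+2,x+r-1]$ on the right.
\end{itemize}
Subtracting the common part leads to a comparison of the boundary terms $\delta(x+r)$, $\delta(x)$, $\delta(x+1)$ weighted by the window sums.

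Since $\delta$ is itself a rising factorial of order $d-1$, it is log-concave. I would expect the resulting expression to reduce to a sum of terms of the form
\[
\delta(a)\delta(b)-\delta(a-1)\delta(b+1)\qquad(a\le b).
\]
Each such term is controlled by the log-concavity and monotonicity of $\delta$. This step is the one I expect to be delicate, because the sign bookkeeping depends on $r$.

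If the direct expansion becomes unwieldy, I would instead induct on $d$, using the recursion
\[
1-F_{d+1}(x,r)=\bigl(1-F_d(x,r)\bigr)\frac{x+d}{x+r+d}.
\]
This is the hypergeometric picture: each additional draw multiplies the avoidance probability by a $d=1$ factor, for which the inequality was checked in Step 1.
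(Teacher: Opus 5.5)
There is a genuine gap. The reduction in Step~2 lands on a false inequality, so the argument fails for every $d\ge3$. At $x=0$, $r=3$ your numerator inequality reads $\bigl(R(4)-R(1)\bigr)\bigl(R(2)-R(1)\bigr)\ge R(3)\bigl(R(3)-R(2)\bigr)$. For $d=3$ we have $R_3(0),\dots,R_3(4)=0,6,24,60,120$, and the inequality says $2052\ge2160$, which is false. More generally, at $x=0$, $r=3$ the difference equals $d\,(d!)^2\bigl(\frac{(d+1)(d+2)(3-d)}{12}-1\bigr)<0$ for every $d\ge3$. The failure is not confined to the boundary: $d=3$, $x=10$, $r=3$ gives $769392<769860$. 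The lemma itself still holds in these cases (in the first, $0.95\cdot0.75\ge1\cdot0.6$), but only because of the denominator factor you discarded. Your argument is therefore complete only for $d\le2$, where the numerator inequality is an identity; for $d=2$ both sides equal $r(r-2)(2x+r+1)(2x+r+3)$.

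For the same reason Step~3 cannot be completed. Log-concavity of $\delta$ makes every term $\delta(a)\delta(b)-\delta(a-1)\delta(b+1)$ with $a\le b$ nonnegative, so a negative quantity has no decomposition into such terms with nonnegative coefficients. The feature that matters is the convexity of $\delta$ (positive third differences of $R_d$ for $d\ge3$). Log-concavity does not detect it, and it works against you: the numerator difference vanishes identically whenever $\delta$ is linear, and for the model case $R(y)=y^3$ it equals $-3r(r-1)(r-2)(2x+r+1)$.

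Your fallbacks do not close the gap. Keeping the slack means showing that the numerator defect ratio is at most $\frac{(x+r)(x+r+d-1)}{(x+r-1)(x+r+d)}$, which is just the lemma rearranged. The recursion $1-F_{d+1}=(1-F_d)\frac{x+d}{x+r+d}$ is correct. However, you give no mechanism by which an inequality between products of quantities $1-G$ survives multiplying each $G$ by a factor depending on both $x$ and $r$.

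What is missing is a decomposition in which every piece has the favourable sign. The paper does this as follows.
\begin{itemize}
\item Set $M=x+d-1$ and read $F_d(x,r)$ as the probability $p(M,r)$ that a uniform $d$-subset of $M$ unmarked and $r$ marked objects contains a marked one.
\item Rewrite the lemma as $\Theta(M,r)\ge\Theta(M+1,r-2)$, where $\Theta(M,r)=p(M+1,r)/p(M,r)$.
\item Pascal's rule and a double count give the exact formula $\Theta(M,r)=1-\pi_1(M,r)/(N+1)$. Here $N=M+r$, and $\pi_1$ is the conditional probability of exactly one marked object given at least one.
\end{itemize}
The inequality then splits into two pieces. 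The first is $1/(N+1)<1/N$; at $d=1$ this is exactly your denominator slack. The second is $\pi_1(M,r)\le\pi_1(M+1,r-2)$, which follows from a monotone likelihood ratio between the two conditional hypergeometric laws.
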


\begin{proof}
Set $M:=x+d-1$, $R:=r$.
Using the identity $(x)_d = d!\dbinom{x+d-1}{d}$, we rewrite
\begin{equation}\label{eq:Fd_hyper_prob}
F_{d}(x,r)=1-\frac{\dbinom{M}{d}}{\dbinom{M+R}{d}}.
\end{equation}
Consider a population containing $M$ unmarked objects and $R$ marked objects.
Draw a $d$-element subset uniformly at random.
Let $J$ denote the number of marked objects selected.
Then
\[
F_{d}(x,r)=\mathbb{P}_{M,R}(J\ge 1),
\]
the probability that the random $d$-subset hits at least one marked item.
Write $p(M,R):=\mathbb{P}_{M,R}(J\ge 1)$.

If $R=2$, then both sides of~\eqref{eq:local} vanish identically, so the inequality holds trivially.
We henceforth assume $R\ge 3$.
The four factors in~\eqref{eq:local} correspond to parameter pairs
\[
(M+1,R),\quad (M+1,R-2),\quad (M,R),\quad (M+2,R-2).
\]
Inequality~\eqref{eq:local} is equivalent to
\begin{equation}\label{eq:theta_ineq}
\Theta(M,R)\ge \Theta(M+1,R-2),
\qquad \text{where }
\Theta(M,R):=\frac{p(M+1,R)}{p(M,R)}.
\end{equation}

Define the unnormalized hypergeometric weights
\[
w_{j}(M,R)=\dbinom{R}{j}\dbinom{M}{d-j},\qquad j\ge 0,
\]
and let
\[
S_{d}(M,R)=\sum_{j\ge 1} w_{j}(M,R).
\]
Then $S_d(M,R)$ counts favourable $d$-subsets intersecting the marked block.
We set $S_0(M,R)=0$. Since $M\ge d-1$ and $R\ge3$, both
$S_d(M,R)$ and $w_1(M,R)=R\binom M{d-1}$ are positive; the
same holds for $(M+1,R-2)$. Thus all conditional probabilities and
relative odds used below are well defined, also when $x=0$ or $d=1$.
Introduce the conditional probability
\[
\pi_1(M,R)=\frac{w_{1}(M,R)}{S_{d}(M,R)}
=\mathbb{P}_{M,R}\bigl(J=1\mid J\ge 1\bigr).
\]
Let $N=M+R$ be total population size.
We claim
\begin{equation}\label{eq:theta_formula}
\Theta(M,R)=1-\frac{\pi_1(M,R)}{N+1}.
\end{equation}

To establish~\eqref{eq:theta_formula}, observe that adding one extra unmarked object gives
\[
S_{d}(M+1,R)=S_{d}(M,R)+S_{d-1}(M,R).
\]
Double-count pairs $(A,e)$ where $A$ is a $d$-subset with at least one marked element, $e\in A$,
and $A\setminus\{e\}$ still contains a marked element. This yields
\[
(N-d+1)S_{d-1}(M,R)=d\,S_{d}(M,R)-w_{1}(M,R),
\]
and rearranging,
\[
\frac{S_{d-1}(M,R)}{S_{d}(M,R)}
=\frac{d-\pi_1(M,R)}{N-d+1}.
\]
Combined with $\dfrac{\dbinom{N}{d}}{\dbinom{N+1}{d}}=\dfrac{N-d+1}{N+1}$, one obtains~\eqref{eq:theta_formula}.

We now compare conditional probabilities $\pi_1(M,R)$ and $\pi_1(M+1,R-2)$.
The positive supports for $j\ge1$ are respectively
$\{1,\ldots,\min(d,R)\}$ and
$\{1,\ldots,\min(d,R-2)\}$.
On their common support for $j\ge 2$, compute the ratio of weights:
\[
\frac{w_{j}(M,R)/w_{1}(M,R)}{w_{j}(M+1,R-2)/w_{1}(M+1,R-2)}
=\frac{(R-1)(R-2)}{(R-j)(R-j-1)}\cdot \frac{M-d+j+1}{M-d+2}\ge 1.
\]
For $d=1$, both conditional probabilities equal $1$. In general,
terms with $j>R-2$ appear only for the distribution with parameters $(M,R)$,
adding further probability mass away from the event $J=1$.
Consequently
\begin{equation}\label{eq:q_ineq}
\pi_1(M,R)\le \pi_1(M+1,R-2).
\end{equation}
In other words: conditioning on hitting the marked set, the law for $(M,R)$ is stochastically shifted toward larger values of $J$.

The total population sizes satisfy $N=M+R$ for $(M,R)$, and $N-1$ for $(M+1,R-2)$.
Substitute~\eqref{eq:q_ineq} into~\eqref{eq:theta_formula}:
\[
\Theta(M,R)=1-\frac{\pi_1(M,R)}{N+1}
\ge 1-\frac{\pi_1(M+1,R-2)}{N}
=\Theta(M+1,R-2).
\]
This proves~\eqref{eq:theta_ineq}, and hence the original inequality~\eqref{eq:local}.
\end{proof}

\begin{remark}[$\mathrm{TP}_2$ interpretation]\label{rem:TP2_interpret}
The preceding proof compares two hypergeometric laws by likelihood ratios.
Place the weights in the two-row array
\[
\begin{pmatrix}
 (w_j(M+1,R-2))_{j\ge1}\\
 (w_j(M,R))_{j\ge1}
\end{pmatrix}.
\]
The likelihood ratio
\[
\frac{w_{j}(M,R)}{w_{j}(M+1,R-2)}
=\frac{R(R-1)}{(R-j)(R-j-1)}\cdot\frac{M-d+j+1}{M+1}
\]
is increasing in $j$ over the common support.
Thus the pair forms a $\mathrm{TP}_2$ array.
Together with~\eqref{eq:theta_formula}, this comparison proves
Lemma~\ref{lem:hypergeom_hit}.
For background on monotone likelihood ratio and total positivity we refer to Karlin~\cite{Karlin1968}.
\end{remark}

The local inequality telescopes to give the following corollary.

\begin{corollary}\label{cor:Phi} 
Let $d\ge 1$, $0\le a<b$, $c\ge 1$ be integers with $b-a\ge 3$. Define
\begin{equation}\label{eq:def_Phi}
\Phi_{d}(a,b;c)=\frac{1-R(a+c)/R(b+c)}{1-R(a)/R(b)}.
\end{equation}
Then
\[
\Phi_{d}(a,b;c)\ge \Phi_{d}(a+1,b-1;c).
\]
\end{corollary}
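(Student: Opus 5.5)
Write $\Phi_d(a,b;c)=F_d(a,b-a)^{-1}\cdot F_d(a+c,b-a)$ in the notation of \eqref{eq:def_Fd}, since
\[
1-\frac{R(a)}{R(b)}=F_d(a,b-a),\qquad 1-\frac{R(a+c)}{R(b+c)}=F_d(a+c,b-a).
\]
With $r=b-a\ge3$, the claim $\Phi_d(a,b;c)\ge\Phi_d(a+1,b-1;c)$ becomes
\[
\frac{F_d(a+c,r)}{F_d(a,r)}\ge\frac{F_d(a+1+c,r-2)}{F_d(a+1,r-2)}.
\]
All denominators are positive because $r-2\ge1$ and $R$ is strictly increasing on nonnegative integers. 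The case $a=0$ needs a remark: $R(0)=0$, so $F_d(0,r)=1$, which is still positive.

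Next define, for fixed $r\ge3$, the one-step ratios
\[
\lambda(x):=\frac{F_d(x+1,r)}{F_d(x,r)},\qquad \mu(x):=\frac{F_d(x+2,r-2)}{F_d(x+1,r-2)}.
\]
Lemma~\ref{lem:hypergeom_hit}, applied at position $x$ with this $r$, is exactly
\[
F_d(x+1,r)F_d(x+1,r-2)\ge F_d(x,r)F_d(x+2,r-2),
\]
that is, $\lambda(x)\ge\mu(x)$ for every integer $x\ge0$. All quantities are positive, so we may multiply these inequalities. Telescoping then gives
\[
\frac{F_d(a+c,r)}{F_d(a,r)}=\prod_{x=a}^{a+c-1}\lambda(x)\ \ge\ \prod_{x=a}^{a+c-1}\mu(x)=\frac{F_d(a+c+1,r-2)}{F_d(a+1,r-2)},
\]
which is the desired inequality. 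The hypothesis $c\ge1$ ensures the product is nonempty, although $c=0$ would give a trivial equality anyway.

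The only point needing care is positivity. Each $F_d(x,r-2)$ with $r-2\ge1$ and $x\ge0$ must be strictly positive so that the divisions are legitimate. This holds because $R_d(x)<R_d(x+r-2)$ for integer $x\ge0$ when $r-2\ge1$. The requirement $b-a\ge3$ is used precisely here: it keeps $r-2\ge1$, so the convention $F_d(\cdot,0)=0$ never enters. The lemma needs $r\ge2$, which is automatically satisfied. I expect no real obstacle; the main step is just recognizing that $\Phi$ is a quotient of $F$'s and that the local inequality is a ratio-monotonicity statement in $x$.
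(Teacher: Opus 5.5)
Your proposal is correct and matches the paper's proof: you set $r=b-a$, rewrite $\Phi_d(a,b;c)=F_d(a+c,r)/F_d(a,r)$, read Lemma~\ref{lem:hypergeom_hit} as the one-step ratio inequality $F_d(x+1,r)/F_d(x,r)\ge F_d(x+2,r-2)/F_d(x+1,r-2)$, and telescope over $x=a,\dots,a+c-1$. Your explicit positivity check, that $F_d(x,s)>0$ for $s\ge1$ (which is exactly where $b-a\ge3$ enters), is a detail the paper leaves implicit.
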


\begin{proof}
Set $r:=b-a$. From definition~\eqref{eq:def_Fd},
\[
\Phi_{d}(a,b;c)=\frac{F_{d}(a+c,r)}{F_{d}(a,r)}.
\]
Lemma~\ref{lem:hypergeom_hit} is equivalent to
\[
\frac{F_{d}(x+1,r)}{F_{d}(x,r)}
\ge \frac{F_{d}(x+2,r-2)}{F_{d}(x+1,r-2)}.
\]
Multiply this inequality successively for $x=a,a+1,\dots,a+c-1$:
\[
\frac{F_{d}(a+c,r)}{F_{d}(a,r)}
\ge \frac{F_{d}(a+c+1,r-2)}{F_{d}(a+1,r-2)},
\]
which is exactly $\Phi_{d}(a,b;c)\ge \Phi_{d}(a+1,b-1;c)$.
\end{proof}

\subsection{The mixed one-sign-change property}\label{sec:sign_change}

We now prove condition \textup{(C2)} of Lemma~\ref{lem:ZSweak}.

\begin{proposition}
\label{prop:C2}
For every \(d\geq1\), the Hoggatt triangle
\[
a(n,k)=\Hog{n}{k}{d}
\]
satisfies condition \textup{(C2)} of Zhu and Sun.
\end{proposition}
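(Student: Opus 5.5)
The plan is to fix $m\ge n>0$ and $0\le h\le m+n$, and to show that the sequence $(\alpha_k(m,n,h))_{0\le k\le\lfloor h/2\rfloor}$ changes sign at most once, from $\ge0$ to $\le0$. Equivalently, I will prove the propagation statement: if $\alpha_k\le0$ for some $k<\lfloor h/2\rfloor$, then $\alpha_{k+1}\le0$. Then $r$ can be taken as the last index with $\alpha_k\ge0$; if $\alpha_0<0$ I take $r=0$ only after checking $\alpha_0\ge0$ directly.

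\textbf{Step 1: factor out the products $a(n,k)a(m,h-k)$ and $a(m,k)a(n,h-k)$.}
For $k<h/2$ I write
\[
\alpha_k=a(n,k)a(m,h-k)\,(X_k-1)+a(m,k)a(n,h-k)\,(Y_k-1).
\]
By \eqref{eq:hoggatt-ratios},
\[
X_k=\frac{R(n-k)R(m+1)}{R(n)R(m-h+k+1)},\qquad
Y_k=\frac{R(n-h+k)R(m+1)}{R(n)R(m-k+1)}.
\]
Since $m\ge n$, one expects $X_k\ge1$ and $Y_k\le1$ in the relevant range. Otherwise both brackets share a sign, and $\alpha_k$ has that sign directly, which I will check fits the pattern. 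This gives
\[
\alpha_k\le0\iff \frac{a(m,k)a(n,h-k)}{a(n,k)a(m,h-k)}\cdot\frac{1-Y_k}{X_k-1}\ \ge1 .
\]
Both $1-Y_k$ and $X_k-1$ (after clearing $R(n)$) are of the form $1-R(\text{small})/R(\text{large})$. The central term $k=h/2$ is handled the same way with only one product. The cases $k>n$ or $h-k>n$, where some entries vanish, are treated separately. There the surviving terms of $\alpha_k$ are all negative, namely $-a(m,k)a(n,h-k)$ or $-a(n,k)a(m,h-k)$, so these trailing indices give $\alpha_k\le0$. This is why the weak-sign version, Lemma~\ref{lem:ZSweak}, is needed.

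\textbf{Step 2: show the ratio $Q_k$ is nondecreasing in $k$.}
Call the displayed quantity $Q_k$. It splits into two parts.

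First, the binomial part $\frac{a(m,k)a(n,h-k)}{a(n,k)a(m,h-k)}$. From the product formula \eqref{eq:hoggatt} it equals $\prod_j\frac{\binom{m+j}{k}\binom{n+j}{h-k}}{\binom{n+j}{k}\binom{m+j}{h-k}}$. Its ratio from $k$ to $k+1$ is a product of factors $\frac{(m+j-k)(n+j-h+k+1)}{(n+j-k)(m+j-h+k+1)}$, each $\ge1$ because $m\ge n$ and $k<h/2$. So this part is nondecreasing.

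Second, the factor $\frac{1-Y_k}{X_k-1}$. After moving $R(n)$, $R(m+1)$ appropriately, I rewrite it as a ratio of the shape $\Phi_d(a,b;c)^{-1}$ from \eqref{eq:def_Phi}, or as a quotient of two such $F_d$-expressions. The arguments are $a\approx n-h+k$ and $b\approx m-k+1$ paired against the shift $c=m-n+1$ or similar. Moving from $k$ to $k+1$ replaces $(a,b)$ by $(a+1,b-1)$. Corollary~\ref{cor:Phi} says $\Phi_d(a,b;c)\ge\Phi_d(a+1,b-1;c)$, which is exactly the monotonicity needed, provided $b-a\ge3$. The small-gap cases $b-a\le2$ occur only near $k=\lfloor h/2\rfloor$ and are checked directly.

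\textbf{Step 3: conclude and identify the main obstacle.}
Since $Q_k$ is a product of nondecreasing positive factors, it is nondecreasing. Hence once $Q_k\ge1$ it stays $\ge1$, so $\alpha_k\le0$ propagates and (C2) follows.

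The main obstacle is Step 2's second part: finding the exact algebraic normalization under which $\frac{1-Y_k}{X_k-1}$ becomes a $\Phi_d$-quotient with the inward-moving pattern $(a,b)\mapsto(a+1,b-1)$ and fixed $c$. I expect to need to redistribute factors of $R(\cdot)$ between the binomial part and this part. If the fixed-$c$ form fails, I would try first to write the factor as $F_d(x+c,r)/F_d(x,r)$ and apply the telescoped Lemma~\ref{lem:hypergeom_hit} directly.
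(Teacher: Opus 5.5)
Your normalization is the paper's own. With $a=n-l$, $b=n-k$, $c=m-n+1$, your $X_k,Y_k$ are the paper's $P_k,Q_k$, your ``binomial part'' is $\rho_k=\prod_{s=1}^{c-1}R(a+s)/R(b+s)$, and your $Q_k$ is $1/T_k$. (The step ratio of $\rho_k$ is $\prod_j\frac{(m+j-k)(m+j-l+1)}{(n+j-k)(n+j-l+1)}$. The factor you wrote has two arguments swapped and is not $\ge1$ in general, though your conclusion is right.)

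The gap is exactly the step you flag, and your proposed fix does not work. One has
\[
\frac{X_k-1}{1-Y_k}=\frac{R(b+c)}{R(a+c)}\,H_\lambda(a,b),\qquad
H_\mu(a,b)=\frac{\mu R(b)-R(a+c)}{R(b+c)-\mu R(a)},\qquad
\lambda=\frac{R(m+1)}{R(n)},
\]
with $\lambda$ fixed as $k$ varies. $H_\mu(a,b)$ is strictly increasing in $\mu$ and equals $\Phi_d(a,b;c)$ precisely at $\mu=U:=R(b+c)/R(b)$. So your factor is a $\Phi_d$-quotient only when $\lambda=U$, i.e.\ $b=n$, $k=0$. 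For $k\ge1$ one has $\lambda<U$, and Corollary~\ref{cor:Phi} says nothing directly about $H_\lambda(a,b)\ge H_\lambda(a+1,b-1)$. Your fallback $F_d(x+c,r)/F_d(x,r)$ is the same quantity, so it fails for the same reason. Even at $k=0$, the comparison partner $H_U(a+1,b-1)$ is not $\Phi_d(a+1,b-1;c)$; the latter sits at $U'=R(b+c-1)/R(b-1)>U$.

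The missing idea is to let $\mu$ vary, as follows.
\begin{enumerate}[label=(\roman*)]
\item The cross-multiplied difference $E(\mu)$ is a quadratic with $\mu^2$-coefficient $R(b-1)R(a)-R(b)R(a+1)<0$, so it is concave.
\item $E(L)>0$ at $L=R(a+c+1)/R(b-1)$, where the next numerator vanishes.
\item $E(U)\ge0$, because $H_\mu(a+1,b-1)$ increases in $\mu$, giving $H_U(a+1,b-1)\le\Phi_d(a+1,b-1;c)\le\Phi_d(a,b;c)=H_U(a,b)$.
\item $\lambda\in(L,U]$: the bound $\lambda\le U$ uses $b\le n$, and $\lambda>L$ is exactly the case $X_{k+1}>1$. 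Otherwise $\alpha_{k+1}<0$ outright.
\end{enumerate}
So the monotonicity of $T_k$ holds only conditionally. It is not the unconditional ``$Q_k$ nondecreasing'' that your Step~3 relies on.

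Your boundary step is also wrong for $k\le n<l$. There
\[
\alpha_k=a(n-1,k)a(m+1,l)-a(n,k)a(m,l),
\]
which is strictly positive for $l=m+1$ and has the sign of $X_k-1$ for $n<l\le m$. These are leading indices (small $k$), not trailing nonpositive ones. You must show that they change sign at most once, and that a nonpositive one is never followed by a positive interior term.

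Both facts hold because $X_k$ is given by the same formula in both ranges and strictly decreases in $k$. In the interior, $\alpha_k/(a(n,k)a(m,l))=(X_k-1)-\rho_k(1-Y_k)$ with $\rho_k>0$ and $Y_k<1$, so $X_k\le1$ forces every later term to be negative. The central term is negative by a direct ratio computation. Only $k>n$ yields trailing terms, and those vanish.
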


\begin{proof}
Fix \(m\geq n>0\) and \(0\leq h\leq m+n\).  Put
\[
l=h-k
\]
for \(0\leq k<h/2\).

We first dispose of the support and boundary ranges.

If \(k>n\), then \(l\geq k>n\), so every term in
\(\alpha_k(m,n,h)\) containing the \(n\)-th or \((n-1)\)-st row
vanishes; hence
\[
\alpha_k(m,n,h)=0.
\]
These are trailing zeros and are harmless for the weak-sign condition.

We may therefore assume \(k\leq n\).  If \(l>m+1\), then
\(\alpha_k(m,n,h)=0\).  If \(l=m+1\), then necessarily \(k\leq n-1\)
(because \(h\leq m+n\)), and
\[
\alpha_k(m,n,h)=a(n-1,k)a(m+1,m+1)>0.
\]
If \(n<l\leq m\), then
\[
a(n,l)=a(n-1,l)=0,
\]
and therefore
\begin{equation}
\alpha_k(m,n,h)
=
a(n,k)a(m,l)\,(P_k-1),
\label{eq:boundary-P}
\end{equation}
where
\begin{equation}
P_k=
\frac{R(n-k)R(m+1)}
     {R(n)R(m-l+1)}.
\label{eq:Pk-boundary}
\end{equation}
As \(k\) increases, \(n-k\) decreases while \(m-l+1\) increases, so
\(P_k\) strictly decreases.  Thus the signs in this range can change
at most once from nonnegative to nonpositive.

It remains to consider \(l\leq n\). Since \(k<l\), we have
\(k<n\); the two entries $a(n,k)$ and $a(m,l)$ used for
normalization are strictly positive. The factor $R(n-l)$ may be zero
and will be handled explicitly. Set
\begin{equation}
a=n-l,\qquad
b=n-k,\qquad
c=m-n+1,
\qquad
\lambda=\frac{R(n+c)}{R(n)}.
\label{eq:abcd}
\end{equation}
Then
\[
0\leq a<b\leq n,\qquad c\geq1,
\]
where the case $m=n$ gives $c=1$. Products over $1,\ldots,c-1$ are then empty and equal to $1$. The replacement \(k\mapsto k+1\) sends
\[
(a,b)\longmapsto(a+1,b-1).
\]

Using \eqref{eq:hoggatt-ratios}, divide
\(\alpha_k(m,n,h)\) by \(a(n,k)a(m,l)\).  We obtain
\begin{equation}
\frac{\alpha_k(m,n,h)}
     {a(n,k)a(m,l)}
=
(P_k-1)-\rho_k(1-Q_k),
\label{eq:normalized-alpha}
\end{equation}
where
\begin{align}
P_k&=
\lambda\frac{R(b)}{R(a+c)},
\label{eq:Pk}\\
Q_k&=
\lambda\frac{R(a)}{R(b+c)},
\label{eq:Qk}\\
\rho_k&=
\prod_{s=1}^{c-1}
\frac{R(a+s)}{R(b+s)}.
\label{eq:rhok}
\end{align}

The function
\[
x\longmapsto \frac{R(x+c)}{R(x)}
=
\prod_{j=0}^{d-1}
\left(1+\frac{c}{x+j}\right)
\]
is decreasing for \(x>0\), with the evident limiting interpretation at
\(x=0\).  If \(a=0\), then \(Q_k=0\); otherwise, since \(a<b\leq n\),
the monotonicity of the displayed ratio gives
\[
Q_k<1.
\]
Moreover \(P_k\) strictly decreases under
\((a,b)\mapsto(a+1,b-1)\).  Hence, once \(P_k\leq1\),
\eqref{eq:normalized-alpha} gives \(\alpha_k<0\), and every subsequent
interior term is strictly negative.

We may therefore assume \(P_k>1\) and define
\begin{equation}
T_k=
\frac{P_k-1}{\rho_k(1-Q_k)}.
\label{eq:Tk}
\end{equation}
Then
\begin{equation}
\alpha_k(m,n,h)\geq0
\quad\Longleftrightarrow\quad
T_k\geq1.
\label{eq:alpha-T}
\end{equation}
We compare consecutive noncentral indices, so $k+1<h/2$ and
$b-a=h-2k\ge3$. We prove that \(T_k\) decreases whenever the
next numerator remains positive.

From \eqref{eq:Pk}--\eqref{eq:rhok},
\begin{equation}
T_k=K(a,b)\,H_\lambda(a,b),
\label{eq:factor-T}
\end{equation}
where
\begin{align}
K(a,b)
&=
\prod_{s=1}^{c}
\frac{R(b+s)}{R(a+s)},
\label{eq:Kab}\\
H_\lambda(a,b)
&=
\frac{\lambda R(b)-R(a+c)}
     {R(b+c)-\lambda R(a)}.
\label{eq:Hlambda}
\end{align}
Clearly
\begin{equation}
K(a,b)>K(a+1,b-1).
\label{eq:Kmon}
\end{equation}

If
\[
\lambda R(b-1)\leq R(a+c+1),
\]
then \(P_{k+1}\leq1\), and all subsequent mixed coefficients are
nonpositive.  We may therefore suppose
\begin{equation}
\lambda R(b-1)>R(a+c+1).
\label{eq:next-positive}
\end{equation}

Set
\begin{equation}
L=\frac{R(a+c+1)}{R(b-1)},
\qquad
U=\frac{R(b+c)}{R(b)}.
\label{eq:LU}
\end{equation}
By \eqref{eq:next-positive}, \(L<\lambda\).  Since \(b\leq n\) and
\(R(x+c)/R(x)\) decreases in \(x\),
\[
\lambda=\frac{R(n+c)}{R(n)}
\leq
\frac{R(b+c)}{R(b)}
=U.
\]
Thus
\begin{equation}
L<\lambda\leq U.
\label{eq:lambda-interval}
\end{equation}

Consider
\begin{align}
E(\mu)
={}&
[\mu R(b)-R(a+c)]
[R(b+c-1)-\mu R(a+1)]
\nonumber\\
&-
[\mu R(b-1)-R(a+c+1)]
[R(b+c)-\mu R(a)].
\label{eq:E}
\end{align}
The inequality
\[
H_\mu(a,b)\geq H_\mu(a+1,b-1)
\]
is equivalent to $E(\mu)\ge0$. To check the denominators explicitly, put
$U'=R(b+c-1)/R(b-1)$. The decreasing ratio $R(x+c)/R(x)$ gives
$U<U'$. For $\mu\le U$,
\begin{align*}
 R(b+c)-\mu R(a)
 &\ge R(b+c)\bigl(1-R(a)/R(b)\bigr)>0,\\
 R(b+c-1)-\mu R(a+1)
 &\ge R(b+c-1)\bigl(1-R(a+1)/R(b-1)\bigr)>0.
\end{align*}
The second bound uses $b-1>a+1$. Thus cross-multiplication is valid
throughout $[L,U]$.

The coefficient of \(\mu^2\) in \(E(\mu)\) is
\[
R(b-1)R(a)-R(b)R(a+1)<0.
\]
Hence \(E\) is concave.

At \(\mu=L\), the second numerator in \eqref{eq:E} vanishes.  The
first numerator is positive since \(R(b)>R(b-1)\) and
\(R(a+c+1)>R(a+c)\).  The remaining denominator is also positive:
\(b-1>a+1\) implies
\(L R(a+1)<R(a+c+1)<R(b+c-1)\).  Consequently
\begin{equation}
E(L)\geq0.
\label{eq:EL}
\end{equation}

At \(\mu=U\),
\begin{equation}
H_U(a,b)
=
\frac{1-R(a+c)/R(b+c)}
     {1-R(a)/R(b)}
=
\Phi_d(a,b;c).
\label{eq:HU}
\end{equation}
Recall that $U'=R(b+c-1)/R(b-1)>U$.
Furthermore \(H_\mu(a+1,b-1)\) is increasing in \(\mu\), because
\[
\frac{\partial}{\partial\mu}H_\mu(a+1,b-1)
=
\frac{
R(b-1)R(b+c-1)-R(a+1)R(a+c+1)}
{[R(b+c-1)-\mu R(a+1)]^2}
>0.
\]
Therefore, by Corollary~\ref{cor:Phi},
\begin{align*}
H_U(a+1,b-1)
&\leq H_{U'}(a+1,b-1)\\
&=\Phi_d(a+1,b-1;c)\\
&\leq\Phi_d(a,b;c)
=H_U(a,b).
\end{align*}
Hence
\begin{equation}
E(U)\geq0.
\label{eq:EU}
\end{equation}

Since \(E\) is concave and \(\lambda\in[L,U]\),
\[
E(\lambda)\geq0.
\]
Thus
\begin{equation}
H_\lambda(a,b)
\geq
H_\lambda(a+1,b-1).
\label{eq:Hmon}
\end{equation}
Combining \eqref{eq:Kmon} and \eqref{eq:Hmon} gives
\[
T_k\geq T_{k+1}.
\]
By \eqref{eq:alpha-T}, the noncentral coefficients within the support
can therefore change sign at most once, from nonnegative to nonpositive.

It remains only to consider the central term when \(h\) is even and
\(k=h/2\).  If \(k>n\), then the central coefficient is \(0\).  If
\(1\leq k\leq n\), put \(b=n-k<n\).  Then
\[
\frac{a(n-1,k)a(m+1,k)}
     {a(n,k)a(m,k)}
=
\frac{R(n+c)}{R(n)}
\frac{R(b)}{R(b+c)}
<1,
\]
Indeed, the ratio is zero when \(b=0\); when \(b>0\), it is
strictly less than one because \(R(x+c)/R(x)\) decreases in \(x\).
Hence the central coefficient is negative.  For \(h=k=0\), it is \(0\).

The boundary and interior arguments fit together as follows. If a
boundary coefficient with $n<l\le m$ is negative, then $P_k<1$;
if it is zero, then $P_k=1$. The same expression $P_k$ decreases
through the transition to $l\le n$, where
$(P_k-1)-\rho_k(1-Q_k)<0$ whenever $P_k\le1$.
Hence a nonpositive boundary term cannot be followed by a positive
interior term. Leading zero terms with $l>m+1$ and the positive term
with $l=m+1$ are consistent with the same sign pattern.  Any terms with \(k>n\)
are trailing zeros.  Consequently, for some index \(r\),
\[
\alpha_k(m,n,h)\geq0\quad(k\leq r),
\qquad
\alpha_k(m,n,h)\leq0\quad(k>r).
\]
This is condition \textup{(C2)} of Lemma~\ref{lem:ZSweak}.
\end{proof}

\subsection{Completion of the Hoggatt preservation theorem}\label{sec:proof_main} 

\begin{proof}[Proof of Theorem~\ref{thm:main}]
Take
\[
a(n,k)=\Hog{n}{k}{d}.
\]
By Chern and Shi~\cite[Theorem~1.2]{ChernShi2026}, the row-generating polynomials
\[
A_n(t)=\sum_{k=0}^n a(n,k)t^k
\]
are strongly \(t\)-log-convex.  Hence condition \textup{(C1)} of
Lemma~\ref{lem:ZSweak} holds.

By Proposition~\ref{prop:C2}, the Hoggatt triangle also satisfies
condition \textup{(C2)}.  Lemma~\ref{lem:ZSweak} therefore implies that
for every strongly \(t\)-log-convex input sequence
\(\{f_k(t)\}_{k\geq0}\), the transformed sequence
\[
G_n(t)=\sum_{k=0}^n
\Hog{n}{k}{d}f_k(t)
\]
is strongly \(t\)-log-convex.
\end{proof}

\section{Continuous Narayana transformations}\label{sec:theta_preservation}

Set
\begin{equation}\label{eq:Rtheta}
R_\theta(x)=x(x+\theta),
\qquad
F_\theta(x,r)=1-\frac{R_\theta(x)}{R_\theta(x+r)}
\quad(x\ge0,\ r\ge1).
\end{equation}
Set $F_\theta(x,0)=0$ for $x\ge0$.
The analogue of Lemma~\ref{lem:hypergeom_hit} has an elementary
factorization.

\begin{lemma}\label{lem:theta_local}
For $\theta\ge0$, integers $x\ge0$ and $r\ge2$,
\begin{equation}\label{eq:theta_local}
F_\theta(x+1,r)F_\theta(x+1,r-2)
\ge
F_\theta(x,r)F_\theta(x+2,r-2).
\end{equation}
\end{lemma}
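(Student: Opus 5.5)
The plan is to exploit the fact that, unlike the general Hoggatt case of Lemma~\ref{lem:hypergeom_hit}, the quadratic $R_\theta(x)=x(x+\theta)$ makes $F_\theta$ factor explicitly, so no probabilistic argument is needed. Writing $y=x+r$ and expanding $R_\theta(y)-R_\theta(x)=(y-x)(y+x+\theta)$, I expect
\[
 F_\theta(x,r)=\frac{r\,(2x+r+\theta)}{(x+r)(x+r+\theta)}\qquad(x\ge0,\ r\ge1).
\]
This formula is valid also at $x=0$, where both sides equal $1$. For $r=0$ it gives $0$, consistent with the convention $F_\theta(x,0)=0$. Step one is to record this factorization.

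Step two is the degenerate case $r=2$. Here $F_\theta(x+1,0)=F_\theta(x+2,0)=0$, so both sides of \eqref{eq:theta_local} vanish and the inequality holds trivially. For $r\ge3$ all four factors are given by the formula above, and the plan is to substitute them directly. The key observation is that the linear numerator factors coincide on both sides. The left side has numerator
\[
 r(2x+r+\theta+2)\cdot(r-2)(2x+r+\theta),
\]
and the right side has numerator
\[
 r(2x+r+\theta)\cdot(r-2)(2x+r+\theta+2).
\]
These are identical and strictly positive for $r\ge3$. So after cancelling them, \eqref{eq:theta_local} reduces to a comparison of the reciprocal denominators alone.

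Step three is that denominator comparison. Put $y=x+r\ge3$. The left side has denominator
\[
 (y+1)(y+1+\theta)(y-1)(y-1+\theta),
\]
while the right side has denominator $y^2(y+\theta)^2$. All of these factors are positive. Hence the inequality is equivalent to
\[
 (y^2-1)\bigl((y+\theta)^2-1\bigr)\le y^2(y+\theta)^2.
\]
This is immediate: each factor on the left is strictly smaller than the corresponding positive factor on the right. So the inequality holds, strictly for $r\ge3$. There is no real obstacle here; the only step requiring care is the bookkeeping of the boundary conventions ($x=0$ and $r=2$) and checking that every denominator is positive, so that cancellation and cross-multiplication are legitimate.
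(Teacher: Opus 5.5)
Your proof is correct and is essentially the paper's argument: the paper directly simplifies the difference to a single fraction with numerator $r(r-2)(2x+r+\theta)(2x+r+\theta+2)\bigl(y^2+(y+\theta)^2-1\bigr)$ and denominator $y^2(y+\theta)^2(y^2-1)\bigl((y+\theta)^2-1\bigr)$, where $y=x+r$. This is exactly what your cancellation of the common numerator and comparison of denominators produce. The only difference is presentation: you treat $r=2$ as a separate case, whereas the paper absorbs it through the factor $r-2$.
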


\begin{proof}
A direct simplification gives
\begin{align*}
&F_\theta(x+1,r)F_\theta(x+1,r-2)
-F_\theta(x,r)F_\theta(x+2,r-2)\\
&\quad=
\frac{r(r-2)(r+\theta+2x)(r+\theta+2x+2)}
{(r+x)^2(r+\theta+x)^2(r+x-1)(r+x+1)
 (r+\theta+x-1)(r+\theta+x+1)}\\
&\qquad\quad\times
\bigl(
2r^2+2r\theta+4rx+\theta^2+2\theta x+2x^2-1
\bigr).
\end{align*}
All denominator factors are positive.  The last factor is at least
$2r^2-1>0$, while $r(r-2)\ge0$, proving \eqref{eq:theta_local}.
\end{proof}

Exactly as in Corollary~\ref{cor:Phi}, Lemma~\ref{lem:theta_local}
telescopes. Thus, for integers $0\le a<b$, $c\ge1$ and $b-a\ge3$,
\begin{equation}\label{eq:Phi_theta}
\Phi_\theta(a,b;c)
:=
\frac{1-R_\theta(a+c)/R_\theta(b+c)}
     {1-R_\theta(a)/R_\theta(b)}
\end{equation}
satisfies
\begin{equation}\label{eq:Phi_theta_mon}
\Phi_\theta(a,b;c)
\ge
\Phi_\theta(a+1,b-1;c).
\end{equation}

\begin{proposition}\label{prop:interpolation_C2}
For every real $\theta\ge0$, the triangle
$a_\theta(n,k)=\mathcal N_\theta(n,k)$ satisfies condition
\textup{(C2)} of Lemma~\ref{lem:ZSweak}.
\end{proposition}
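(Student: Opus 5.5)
The plan is to transplant the proof of Proposition~\ref{prop:C2} essentially verbatim, with $R_d$ replaced by $R_\theta(x)=x(x+\theta)$ and Corollary~\ref{cor:Phi} replaced by \eqref{eq:Phi_theta_mon}. The starting point is the ratio identities for $a_\theta(n,k)=\mathcal N_\theta(n,k)=n!(\theta+1)_n/(k!(\theta+1)_k(n-k)!(\theta+1)_{n-k})$. A direct computation gives
\[
\frac{a_\theta(N+1,j)}{a_\theta(N,j)}=\frac{(N+1)(N+1+\theta)}{(N-j+1)(N-j+1+\theta)}=\frac{R_\theta(N+1)}{R_\theta(N-j+1)},
\qquad
\frac{a_\theta(N-1,j)}{a_\theta(N,j)}=\frac{R_\theta(N-j)}{R_\theta(N)},
\]
which is exactly the form of \eqref{eq:hoggatt-ratios}, with $R_\theta$ in place of $R$. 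Iterating these ratios over $c=m-n+1$ steps produces products of consecutive values, namely $\prod_{s}R_\theta(\cdot+s)$, rather than a single shifted rising factorial. I will therefore redefine $\lambda=\prod_{s=1}^{c}R_\theta(n+s)/R_\theta(n+s-1)$ as the telescoped product $R_\theta(n+c)/R_\theta(n)$. This makes $\lambda$, $P_k$, $Q_k$, $\rho_k$ and $K(a,b)$ formally identical to \eqref{eq:Pk}--\eqref{eq:rhok} and \eqref{eq:Kab}.

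The support and boundary ranges are handled exactly as before: $k>n$, $l>m+1$, $l=m+1$, and $n<l\le m$, where $P_k$ strictly decreases in $k$. The central term $k=h/2$ is also unchanged. For the interior range $l\le n$, I rewrite $\alpha_k/(a(n,k)a(m,l))$ as $(P_k-1)-\rho_k(1-Q_k)$ and repeat the chain of reductions. First I dispose of the case $P_k\le 1$. Then I factor $T_k=K(a,b)H_\lambda(a,b)$ and note $K(a,b)>K(a+1,b-1)$. Next I locate $\lambda\in[L,U]$, check concavity of $E(\mu)$, check $E(L)\ge0$, and finally obtain $E(U)\ge0$ via $H_U(a,b)=\Phi_\theta(a,b;c)\ge\Phi_\theta(a+1,b-1;c)=H_{U'}(a+1,b-1)\ge H_U(a+1,b-1)$ using \eqref{eq:Phi_theta_mon}.

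All of these steps use only elementary properties of $R$. I will check each for $R_\theta$:
\begin{itemize}
\item $R_\theta$ is positive and strictly increasing on $(0,\infty)$, with $R_\theta(0)=0$.
\item $x\mapsto R_\theta(x+c)/R_\theta(x)=(1+c/x)(1+c/(x+\theta))$ is strictly decreasing for $x>0$; this holds for $\theta\ge0$.
\item The $\mu^2$-coefficient $R_\theta(b-1)R_\theta(a)-R_\theta(b)R_\theta(a+1)$ is negative. This follows from monotonicity when $a\ge0$, and it is $-R_\theta(b)R_\theta(1)<0$ when $a=0$, so $\theta=0$ is also fine.
\item The derivative sign $R_\theta(b-1)R_\theta(b+c-1)-R_\theta(a+1)R_\theta(a+c+1)>0$ holds because $b-1>a+1$.
\end{itemize}

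The step I expect to be the real obstacle is the telescoping of $\lambda$. In the Hoggatt case the $c$-step ratio collapses because $R_d(N+1)/R_d(N-j+1)$ telescopes in $N$ only through the $\rho_k$ product. So I must confirm that the same bookkeeping yields exactly \eqref{eq:Pk}--\eqref{eq:rhok} here, with $\lambda=R_\theta(n+c)/R_\theta(n)$ interpreted as the product of the per-step ratios. If the ratios for $a(m+1,\cdot)$ versus $a(m,\cdot)$ instead involve $R_\theta(m+1)$ rather than a telescoped expression, I will redo the normalization with $m$ directly, as in the paper, where $c$ enters only through $m=n+c-1$. I will then verify that the shape $(P_k-1)-\rho_k(1-Q_k)$ persists with $\rho_k=R_\theta(a+c)\cdots$ type factors.

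Once this is in place, the remaining inequalities are the listed monotonicity facts together with \eqref{eq:Phi_theta_mon}. Assembling the boundary, interior and central ranges exactly as at the end of the proof of Proposition~\ref{prop:C2} then gives condition \textup{(C2)}.
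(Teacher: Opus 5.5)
Your proposal is correct and follows the paper's proof. Like the paper, you rerun the support, boundary, interior and central analysis of Proposition~\ref{prop:C2} with $R_\theta(x)=x(x+\theta)$ in place of $R$, using the ratio identities \eqref{eq:theta_ratios}, the decrease of $R_\theta(x+c)/R_\theta(x)$, the negativity of the $\mu^2$-coefficient, and \eqref{eq:Phi_theta_mon}. The telescoping of $\lambda$ that you flag as the obstacle is not one: the normalization \eqref{eq:normalized-alpha}--\eqref{eq:rhok} used only the one-step ratios, which you have verified hold verbatim for $R_\theta$, so $\lambda=R_\theta(m+1)/R_\theta(n)=R_\theta(n+c)/R_\theta(n)$ arises automatically.
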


\begin{proof}
We give the support and boundary checks explicitly, then verify
that the interior one-sign-change argument of
Proposition~\ref{prop:C2} applies with $R(x)=x(x+1)$ replaced
by $R_\theta(x)=x(x+\theta)$.
Fix $m\ge n>0$ and $0\le h\le m+n$, and put $l=h-k$ for
$0\le k<h/2$.  For $k>n$, the mixed coefficient is zero.
For $k\le n$ and $l>m+1$, it is again zero; if $l=m+1$,
only the term $a_\theta(n-1,k)a_\theta(m+1,m+1)$ survives,
which is positive.  For $n<l\le m$, division by
$a_\theta(n,k)a_\theta(m,l)>0$ gives $P_k-1$ with
\[
P_k=\frac{R_\theta(n-k)R_\theta(m+1)}
{R_\theta(n)R_\theta(m-l+1)}.
\]
This decreases as $k$ increases, so its signs have at most one
change from nonnegative to nonpositive.  These are precisely the
boundary cases needed for the weak-sign condition; zero coefficients
outside the support are allowed.

In the remaining interior range $l\le n$, the variables
$a=n-l$, $b=n-k$, and $c=m-n+1$ satisfy
$0\le a<b\le n$ and $c\ge1$.  From
\eqref{ar:eq:classicalcoeff},
\begin{equation}\label{eq:theta_ratios}
\frac{a_\theta(N+1,j)}{a_\theta(N,j)}
=
\frac{R_\theta(N+1)}{R_\theta(N-j+1)},
\qquad
\frac{a_\theta(N-1,j)}{a_\theta(N,j)}
=
\frac{R_\theta(N-j)}{R_\theta(N)}.
\end{equation}
Moreover, for $c>0$,
\[
x\longmapsto
\frac{R_\theta(x+c)}{R_\theta(x)}
=
\left(1+\frac cx\right)
\left(1+\frac{c}{x+\theta}\right)
\]
is decreasing on $x>0$.  The endpoint comparison used in the proof of
Proposition~\ref{prop:C2} is supplied by
\eqref{eq:Phi_theta_mon}.

For completeness, the concavity step also remains valid: in the notation
of \eqref{eq:E}, the coefficient of $\mu^2$ becomes
\begin{align*}
&R_\theta(b-1)R_\theta(a)
-R_\theta(b)R_\theta(a+1)\\
&\qquad=-(a+b+\theta)
\bigl(2ab+\theta(a+b)+b-a\bigr)<0
\end{align*}
whenever $0\le a<b$ and $b-a\ge3$.  These are exactly the
consecutive noncentral indices for which $k+1<h/2$; no comparison
with a nonexistent central partner is needed.  At $a=0$, the
quantity $Q_k$ of \eqref{eq:Qk} vanishes because
$R_\theta(0)=0$, so the normalized expression and its endpoint
limit remain well-defined.  All the remaining inequalities in
that proof use only the monotonicity just displayed, the fact that
$R_\theta$ is increasing on $[0,\infty)$, and the telescoping inequality
\eqref{eq:Phi_theta_mon}.  Hence the same normalized quantity $T_k$ is
nonincreasing until its numerator becomes nonpositive, and the mixed
coefficients have at most one sign change from nonnegative to
nonpositive.  This is precisely condition \textup{(C2)}.
\end{proof}

\begin{proof}[Proof of Theorem~\ref{thm:interpolation}]
For the triangle $a_\theta(n,k)$, Proposition~\ref{prop:interpolation_C1}
gives condition \textup{(C1)} of Lemma~\ref{lem:ZSweak}, and
Proposition~\ref{prop:interpolation_C2} gives condition \textup{(C2)}.
The weak-sign Zhu--Sun criterion therefore proves that
\[
G_n^{(\theta)}(t)
=
\sum_{k=0}^n a_\theta(n,k)f_k(t)
\]
is strongly $t$-log-convex whenever the input sequence is strongly
$t$-log-convex.
\end{proof}

\begin{remark}\label{rem:interpolation_endpoints}
At $\theta=0$, Theorem~\ref{thm:interpolation} recovers the type~$B$
Narayana transformation
\[
G_n(t)=\sum_{k=0}^n\binom nk^2 f_k(t)
\]
treated by Zhu and Sun~\cite{ZhuSun2015}.  At $\theta=1$, it gives the
shifted type~$A$ Narayana transformation
\[
G_n(t)=\sum_{k=0}^nN(n+1,k+1)f_k(t),
\]
which is exactly the $d=2$ specialization of Theorem~\ref{thm:main}.
Thus Theorem~\ref{thm:interpolation} gives a continuous preservation
theorem joining these two Narayana kernels.
\end{remark}

\begin{remark}[Ordinary versus shifted Narayana transformations]
\label{rem:ZhuSun_index_shift}
For $n\ge1$, write
\[
 N(n,k)=\frac1n\binom nk\binom n{k-1}\qquad(0\le k\le n).
\]
In Proposition~2.11 of Zhu and Sun~\cite{ZhuSun2015}, the ordinary
Narayana transformation is written as
$\mathcal Z_n(f)=\sum_{k=0}^n N(n,k)f_k$.
Although no separate value at $k=0$ is specified there, the usual
convention $\binom n{-1}=0$ yields $N(n,0)=0$ for $n\ge1$;
the formula $\frac1n\binom nk\binom n{k-1}$ does not apply at $n=0$.
Using $N(n+1,k+1)=\mathcal N_1(n,k)$, for
$g_k=f_{k+1}$ we have the exact identity
\[
 \mathcal Z_{n+1}(f)
   =\sum_{k=0}^n\mathcal N_1(n,k)g_k
   =G_n^{(1)}(g)\qquad(n\ge0).
\]
Thus the ordinary positive-indexed Narayana transformation and the
shifted type~$A$ transformation are identical after simultaneous
reindexing of inputs and outputs.  Their preservation statements,
however, are not formally equivalent \emph{as formulated}:
Zhu and Sun assume strong $t$-log-convexity of the full input
$(f_k)_{k\ge0}$, including $f_0$, even though this term does not
contribute to $\mathcal Z_n(f)$ for $n\ge1$.  Our theorem imposes
this condition only on the reindexed input $(g_k)_{k\ge0}$.
An arbitrary strongly $t$-log-convex input $g$ need not admit a
strongly $t$-log-convex backward extension to an $f_0$:
for example, $g_0=1+t$ and $g_k=t^k$ for $k\ge1$ have this property,
but an extension would require
$t f_0(t)-(1+t)^2\succeq_t0$, which is impossible by its
constant coefficient.  In particular, the shifted formulation gives
the type~$A$ preservation theorem directly, without an additional
backward-extension hypothesis.  Any convention at $n=0$ for
$\mathcal Z_n$ is a separate boundary issue.
\end{remark}

\section{Special cases and remarks}\label{sec:special} 

\subsection*{The binomial case \texorpdfstring{\(d=1\)}{d=1}}

For \(d=1\),
\[
\Hog{n}{k}{1}=\binom nk.
\]
Theorem~\ref{thm:main} specializes to the strong
\(t\)-log-convexity-preserving property of the binomial transformation,
one of the examples treated by Zhu and Sun~\cite{ZhuSun2015}.

\subsection*{The Narayana case \texorpdfstring{\(d=2\)}{d=2}}

For \(d=2\),
\[
\Hog{n}{k}{2}
=
\frac{1}{k+1}\binom nk\binom{n+1}{k}
=
N(n+1,k+1).
\]
Thus Theorem~\ref{thm:main} contains the shifted type~$A$ Narayana
transformation as a special case.  Section~\ref{sec:interpolation}
places this kernel in the continuous family $\mathcal N_\theta(n,k)$:
at $\theta=1$ it is the present $d=2$ Hoggatt kernel, whereas at
$\theta=0$ it becomes the type~$B$ Narayana kernel $\binom nk^2$ of
Zhu and Sun~\cite{ZhuSun2015}.

\subsection*{The Baxter case \texorpdfstring{\(d=3\)}{d=3}}

For \(d=3\), the Hoggatt triangle is the Baxter triangle.  Liu and
Mao~\cite{LiuMao2026} proved that the Baxter transformation preserves
ordinary log-convexity.  Theorem~\ref{thm:main} gives the corresponding
strong coefficientwise preservation property and, at the same time,
extends it to all \(d\geq1\).

\subsection*{An application to Bell polynomials}

Let \(B_k(t)=\sum_{j=0}^k S(k,j)t^j\) be the Bell (Touchard)
polynomial, where \(S(k,j)\) is a Stirling number of the second kind.
The sequence \(\{B_k(t)\}_{k\geq0}\) is strongly
\(t\)-log-convex by Chen, Wang and Yang~\cite[Corollary~3.1]{ChenWangYang2011}.
Theorem~\ref{thm:main} therefore gives, for every \(d\geq1\), the
strong \(t\)-log-convexity of the Hoggatt--Bell polynomials
\[
\mathcal B_n^{(d)}(t)
=
\sum_{k=0}^n\Hog{n}{k}{d}B_k(t).
\]

\begin{remark}
The proof of Proposition~\ref{prop:C2} actually establishes a stronger
monotonicity statement.  In the interior range \(l\leq n\), after the
normalization \eqref{eq:Tk}, the quantity \(T_k\) is nonincreasing until
the numerator becomes nonpositive.  This normalized monotonicity may be
useful for other transformations arising from ratios of rising
factorials.
\end{remark}

\section{Concluding remarks and further directions}
\label{sec:conclude}

The continuous Narayana family \(\mathcal N_\theta\) is the common classical specialization.
In addition to the coefficientwise Hankel results, Theorem~\ref{thm:wangyang_intro}
extends the Wang--Yang total-positivity theorems for Narayana triangles
and squares to all $\theta>-1$.
Its row polynomials
carry several simultaneous structures: they are \(\gamma\)-positive,
real-rooted, Stieltjes moment sequences after every specialization
\(t\geq0\), and coefficientwise Hankel-totally positive before
specialization.  The last property is an all-orders strengthening of
strong \(t\)-log-convexity.  The transformation theorem then shows that
this same continuous family preserves strong \(t\)-log-convexity.
At the endpoint \(\theta=1\), the kernel is the \(d=2\) Hoggatt triangle,
and Theorem~\ref{thm:main} extends the preservation phenomenon in the
transverse direction to all integers \(d\geq1\).

The Hoggatt proof separates the preservation problem into two independent
pieces.  The global positivity of the Hoggatt row-generating polynomials
is supplied by the Schur-positive argument of Chern and Shi, while the
mixed one-sign-change property ultimately rests on the hypergeometric
hitting inequality of Lemma~\ref{lem:hypergeom_hit}.

The classical Gegenbauer approach also gives constructive information
about minors of arbitrary order. Their extreme coefficients are determined
by Theorem~\ref{ar:thm:extreme}, while the normalized minors need not be
$\gamma$-positive (Remark~\ref{ar:rem:minors-not-gamma}).
A natural open problem is an explicit coefficientwise-positive formula
for their interior coefficients. A planar-network interpretation of the
hypergeometric hitting inequality could yield broader
positivity-preserving transformations. A separate manuscript
\cite{ZengQUltraspherical} investigates two $q$-analogues of the
present interpolation; no $q$-deformation is used in the proofs here.

\subsection*{A Gamma-shift extension}

A natural direction is to replace the consecutive shifts
\(1,2,\ldots,d\) by other shifts.  For
\(\boldsymbol{\alpha}=(\alpha_1,\ldots,\alpha_s)\in[1,\infty)^s\), define
\[
A_{\boldsymbol{\alpha}}(n,k)
=
\prod_{j=1}^s
\frac{\Gamma(n+\alpha_j)\Gamma(\alpha_j)}
     {\Gamma(k+\alpha_j)\Gamma(n-k+\alpha_j)},
\qquad 0\leq k\leq n.
\]
The Hoggatt triangle is recovered from
\(\boldsymbol{\alpha}=(1,2,\ldots,d)\).  The interpolation of
Section~\ref{sec:interpolation} is exactly the one-dimensional ray
\[
\boldsymbol{\alpha}=(1,1+\theta),
\qquad \theta\ge0,
\]
so Theorem~\ref{thm:interpolation} completely settles this family.

It remains natural to ask for which general parameter vectors
\(\boldsymbol{\alpha}\) the row-generating polynomials
\[
\sum_{k=0}^n A_{\boldsymbol{\alpha}}(n,k)t^k
\]
are strongly \(t\)-log-convex and for which parameters the associated
linear transformation preserves strong \(t\)-log-convexity.  Our proof
suggests that the decisive issue is an analogue of
Lemma~\ref{lem:hypergeom_hit} for
\[
R_{\boldsymbol{\alpha}}(x)
=
\prod_{j=1}^s (x+\alpha_j-1).
\]
The family $\boldsymbol{\alpha}=(1,1+\theta)$ shows that nonintegral
shifts can satisfy the required inequalities.  We leave the
characterization of all admissible shift vectors as an open problem.

\end{document}